\theoremstyle{dgthm}
\newtheorem{theorem}{Theorem}
\newtheorem{corollary}{Corollary}
\newtheorem{proposition}{Proposition}
\theoremstyle{dgdef}
\newtheorem{definition}{Definition}
\newtheorem{example}{Example}
\newtheorem{remark}{Remark}
\newcommand{\rmd}{\mathrm{d}}
\begin{document}

	\articletype{Research Article}
	\received{\today}
	\revised{Month	DD, YYYY}
  \accepted{Month	DD, YYYY}
  \journalname{De~Gruyter~Journal}
  \journalyear{YYYY}
  \journalvolume{XX}
  \journalissue{X}
  \startpage{1}
  \aop
  \DOI{10.1515/sample-YYYY-XXXX}

\title{{Catenaries} and singular minimal surfaces in the simply isotropic space}

\author[1]{Luiz C. B. da Silva}
\author[2]{Rafael L\'opez}
\runningauthor{L. C. B. da Silva and R. L\'opez}
\affil[1]{\protect\raggedright 
Weizmann Institute of Science, Department of Physics of Complex Systems, Rehovot 7610001, Israel, e-mail: luiz.da-silva@weizmann.ac.il}
\affil[2]{\protect\raggedright 
Universidad de Granada, Departamento de Geometr\'ia y Topolog\'ia, 18071 Granada, Spain, e-mail: rcamino@ugr.es}
	

\abstract{
This paper investigates the hanging chain problem in the simply isotropic plane as well as its 2-dimensional analog in the simply isotropic space. The simply isotropic plane and space are two- and three-dimensional geometries equipped with a degenerate metric whose kernel has dimension 1. Although the metric is degenerate, the hanging chain and hanging surface problems are well-posed if we employ the relative arc length and relative area to measure the {weight}. Here, the concepts of relative arc length and relative area emerge by seeing the simply isotropic geometry as a relative geometry. In addition to characterizing the simply isotropic catenary, i.e., the solutions of the hanging chain problem, we also prove that it is the generating curve of a minimal surface of revolution in the simply isotropic space. Finally, we obtain the 2-dimensional analog of the catenary, the so-called singular minimal surfaces, and determine the shape of a hanging surface of revolution in the simply isotropic space.
}

\keywords{Simply isotropic space, catenary, singular minimal surface, relative geometry.}

\maketitle

\section{Introduction} 
\label{intro}

The problem of finding the shape of a hanging inextensible chain has received much attention from scientists since the times of Galileo. The problem was solved at the end of the XVII century by Hooke, Leibniz, Huygens, and Bernoulli, among others. The solution is the curve known as the catenary. Later, Euler proved in 1744 that the catenoid is the only non-planar minimal surface of revolution and that the generating curve of this surface is just the catenary. Thus, the catenary appears in two different scenarios, first as a solution to the physical problem of a hanging chain and second as a solution to the problem of finding the surfaces of revolution with minimum surface area. {Recently, the study of catenaries has been also extended to ambient spaces of constant curvature \cite{Lopez2022CatenarySpaceForms,Lopez2022CatenaryLorentzSpaceForms}, where, among other things, it is shown that non-zero curvature implies that solving the hanging chain problem is no longer equivalent to finding minimal surfaces of revolution.}  

In this paper, we propose the hanging chain problem in the simply isotropic plane $\mathbb{I}^2$. Similarly to the Euclidean setting, we will investigate whether the solution curve of this problem is the generating curve of a minimal surface of revolution of the $3$-dimensional simply isotropic space $\mathbb{I}^3$. For this last question, the plane $\mathbb{I}^2$ will be immersed in $\mathbb{I}^3$ and the reference line that serves to define the weight for curves of $\mathbb{I}^2$ will be the rotation axis of surfaces of revolution of $\mathbb{I}^3$.

The simply isotropic plane  $\mathbb{I}^2$ is the plane $\mathbb{R}^2$ endowed with the degenerate metric $\rmd s^2=\rmd x^2$ where $x$ and $z$ are the canonical Cartesian coordinates of $\mathbb{R}^2$. Analogously, the simply isotropic space $\mathbb{I}^3=\{(x,y,z)\in\mathbb{R}^3\}$ is the space equipped with the degenerate metric $\rmd s^2=\rmd x^2+\rmd y^2$.  The fact that the metric is degenerate is an obstacle regarding the minimization of the {weight} of a curve in $\mathbb{I}^2$. Indeed, let  $\gamma(x)=(x,z(x))$, $x\in [a,b]$, be a curve in $\mathbb{I}^2$. If we were to follow the same steps as in the Euclidean version of the hanging chain problem, first we would fix a reference line to measure the weight of a curve. Supposing that the reference line is the  $z$-axis, $L_z$, which is an isotropic line of $\mathbb{I}^2$, i.e., a line of length zero, the weight of a curve would be calculated by means of the distance between the points of $\gamma$ and $L_z$. The hanging chain would then minimize the {weight}, which correspond to the functional
\begin{equation*}
\mathcal{F}[\gamma]= \int_\gamma \mbox{dist}(\gamma(x),L_z)\, \rmd s-\lambda \int_\gamma\, \rmd s,
\end{equation*}
where $\mathcal{F}$ is defined among all curves with the same {length and} endpoints $\gamma(a)$ and $\gamma(b)$. The constant $\lambda$ is a Lagrange multiplier due to the length constraint. However, since $\mbox{dist}(\gamma(x),L_z)$ in $\mathbb{I}^2$ is just $\vert x\vert$, it follows that $\rmd s=\rmd x$, from which we conclude that  the functional $\mathcal{F}$ can be calculated explicitly, obtaining $\mathcal{F}=\frac{1}{2}(b^2-a^2)-\lambda(b-a)$. In other words, the functional {would be} constant for all curves with the same endpoints and the hanging chain problem would turn out to be trivial. A similar situation occurs if the reference line is the $x$-axis.

We can obtain a well-posed and non-trivial hanging chain problem by resorting to concepts of relative geometry. In $\mathbb{I}^2$, the points at a constant distance from a center form a pair of lines parallel to the $z$-axis, which does not lead to a manageable notion of a unit normal. The idea is to replace the unit circle with a curve of constant curvature 1, which in $\mathbb{I}^2$ is a parabola whose axis is parallel to the $z$-axis. By viewing $\mathbb{I}^2$ as a relative geometry, the role of the unit normal is then played by the vector connecting the focus of a unit parabola $\Sigma^1$ to the point of $\Sigma^1$ whose tangent line is parallel to the tangent of the curve, called the relative normal. As a byproduct of using a relative normal, one may introduce a new notion of arc length known as the relative arc length and denoted by $\rmd s^*$. In this work, we propose to replace the simply isotropic arc length element $\rmd s$ with the relative arc length element $\rmd s^*$ in the definition of the {weight} of a curve in $\mathbb{I}^2$. We will show that the modified {weight} functional leads to non-trivial solutions, where the graph of the natural logarithm then plays the role of a catenary in the simply isotropic space. Similarly to the Euclidean setting, the revolution of the graph of the logarithm around an isotropic axis also leads to a minimal surface in the simply isotropic space.


We may extend the hanging chain problem to surfaces whose solutions are called isotropic singular minimal surfaces. The surface is suspended under its weight measured with respect to a reference plane that can be isotropic or non-isotropic. As in the one-dimensional case, we replace the area element with the relative area. In contrast to the study of isotropic catenaries, we could not obtain an explicit parametrization of the isotropic singular minimal surfaces because the elliptic equation describing the surface cannot be integrated. However, we will focus on the classification of the isotropic singular minimal surfaces of revolution, obtaining a complete classification.


We divide this paper as follows. In Section  \ref{s-pre}, we revise the basic notions of the differential geometry of curves and surfaces in the simply isotropic spaces $\mathbb{I}^2$ and $\mathbb{I}^3$, with a particular emphasis on the notions of normal vector fields of curves and surfaces. In Section \ref{s-catenary}, we solve the hanging chain problem in the isotropic plane and obtain the concept of the isotropic catenary. In addition, we provide several characterizations of the isotropic catenary in terms of the curvature of the curve and certain vector fields of $\mathbb{I}^2$. In Section \ref{s-catenary2}, by rotating isotropic catenaries around an isotropic axis, we obtain minimal surfaces of revolution in $\mathbb{I}^3$. In Section \ref{s-surface}, we extend the hanging chain problem to dimension two, where the concept of isotropic singular minimal surface arises as a generalization of the isotropic catenary. As in the one-dimensional problem, it is necessary to distinguish between the cases where the reference plane is isotropic and non-isotropic.  Finally, Section \ref{s-invariant}  is devoted to fully classifying invariant singular minimal surfaces. First, we show that there exist no helicoidal singular minimal surfaces. Then, for surfaces of revolution in $\mathbb{I}^3$, we characterize Euclidean surfaces of revolution in Subsection \ref{s-elliptic} and {surfaces of} parabolic revolution in Subsection \ref{s-parabolic}.
 
 \section{Preliminaries}
\label{s-pre}

In this section, we review some basic notions of the differential geometry of curves and surfaces in the simply isotropic spaces $\mathbb{I}^2$ and $\mathbb{I}^3$. For more details, we refer the reader to \cite{si1,daSilvaMJOU2021}, or to \cite{Sachs1987,Sachs1990} for textbook sources (in German), though part of the discussion on curves appears here for the first time, to the best of our knowledge. 

The \emph{simply isotropic space} $\mathbb{I}^3$ corresponds to the canonical real vector space $\mathbb{R}^3$ with Cartesian coordinates $(x,y,z)$, and   equipped with the degenerate metric
\begin{equation}\label{def::IsotMetric}
    \langle \mathbf{u},\mathbf{v}\rangle = u^1v^1+u^2v^2,
\end{equation}
where $\mathbf{u}=(u^1,u^2,u^3)$ and $\mathbf{v}=(v^1,v^2,v^3)$. A non-zero vector $\mathbf{v}$ is said to be \emph{isotropic} if $\langle\mathbf{v},\mathbf{v}\rangle=0$. In addition, on the set of isotropic vectors, i.e., $\{\mathbf{v}\in\mathbb{I}^3:\mathbf{v}=(0,0,u^3)\}$, we shall use the secondary metric
\begin{equation}\label{def::2ndIsotMetric}
     \llangle \mathbf{u},\mathbf{v}\rrangle = u^3v^3.
\end{equation}
Therefore, the space $\mathbb{I}^3$ is an example of a Cayley-Klein vector space \cite{StruveRM2005}. 

A plane is said to be \emph{isotropic} if it contains an isotropic vector. Thus, in $\mathbb{I}^3$ an isotropic vector and an isotropic plane are vertical, that is, parallel to the $z$-axis. The inner product induces a semi-norm   $\Vert u\Vert=\sqrt{\langle u,u\rangle}$. The \emph{top-view projection} of a vector $\mathbf{u}$ is the projection $\tilde{\mathbf{u}}$ over the $xy$-plane,  
\begin{equation}\label{def:TopViewProjection}
\mathbf{u}=(u^1,u^2,u^3)\longmapsto \tilde{\mathbf{u}}\equiv(u^1,u^2,0). \end{equation}
In the following, it will be useful to resort to the Euclidean inner and vector products respectively written as 
\begin{equation*}
    \mathbf{u}\cdot\mathbf{v} = u^1v^1+u^2v^2+u^3v^3
\quad\mbox{ and }\quad    \mathbf{u}\times\mathbf{v} = (u^2v^3-u^3v^2,u^3v^1-u^1v^3,u^1v^2-u^2v^1).
\end{equation*}

Analogously,   the \emph{simply isotropic plane} $\mathbb{I}^2$ is defined as the canonical real vector space $\mathbb{R}^2=\{(x,z):x,z\in\mathbb{R}\}$ equipped with the degenerate metric
\begin{equation*}
    \langle \mathbf{u},\mathbf{v}\rangle = u^1v^1,
\end{equation*}
where $\mathbf{u}=(u^1,u^3)$ and $\mathbf{v}=(v^1,v^3)$. The secondary metric $\mathbb{I}^2$ is also defined as in Eq. \eqref{def::2ndIsotMetric}. Note we can alternatively see $\mathbb{I}^2$ isometrically embedded in $\mathbb{I}^3$ as the surface implicitly defined by the equation $y=0$.

\subsection{Geometry of curves in the simply isotropic plane}

Let $I\subset\mathbb{R}$ denote an interval and $\gamma(t)=(x(t),z(t))$, $t\in I$, a smooth curve in $\mathbb{I}^2$. In what follows, we shall restrict our discussion to \emph{admissible curves}, i.e., curves whose tangent lines are not isotropic, i.e., $\rmd x/\rmd t\not=0$ for all $t\in I$. Note that a curve $\gamma:I\to\mathbb{I}^2$ is parametrized by arc length $s$ if 
\begin{equation*}
     \gamma(s)=(\pm s,z(s)). 
\end{equation*}
The normal to $\gamma$ with respect to the simply isotropic metric is the isotropic vector $\mathcal{N}=(0,1)$, which is normalized by the secondary metric: $\llangle\mathcal{N},\mathcal{N}\rrangle=1$. The unit tangent is $\mathbf{T}(s)=(\pm1,z'(s))$. Thus  
\begin{equation*}
    \mathbf{T}' = (0,z'')=\kappa\,\mathcal{N},\quad\kappa=z''.
\end{equation*}
The function $\kappa$ is the (signed) \emph{simply isotropic curvature} of $\gamma$.

For a generic parameter $t$, let us write $\gamma(t)=(x(t),z(t))$. We shall distinguish between derivatives with respect to the arc length parameter $s$ and a generic parameter $t$ by respectively using a prime and a dot: e.g.,  $x'=\rmd x/\rmd s$ and $\dot{x}=\rmd x/\rmd t$. For a generic parametrization, the unit tangent becomes
\begin{equation*}
    \mathbf{T}=\frac{\dot{\gamma}}{\Vert\dot{\gamma}\Vert}=\frac{(\dot{x},\dot{z})}{\sqrt{\dot{x}^2}} =   \left (\pm 1,\frac{\dot{z}}{\dot{x}}\right).  
\end{equation*}
On the other hand, to find the curvature, we use the chain rule
\begin{equation}\label{eq::3dNmin}
    \kappa\,\mathcal{N} = \mathbf{T}' = \frac{\rmd t}{\rmd s}\,\dot{\mathbf{T}}=\frac{1}{\dot{x}}\left(0,\frac{\ddot{z}\dot{x}-\dot{z}\ddot{x}}{\dot{x}^2}\right).
\end{equation}
Therefore, the curvature function of $\gamma(t)=(x(t),z(t))$, $\dot{x}>0$, is given by 
\begin{equation}\label{curvature}
\kappa = \frac{\dot{x}\ddot{z}-\ddot{x}\dot{z}}{\dot{x}^3}.
\end{equation}
From now on, we may assume for simplicity that $\dot{x}>0$, after applying a simply isotropic rigid motion $(x,z)\mapsto(\pm x+a,z)$ if necessary.

\begin{example}
The parabola $\gamma(t)=(t,\frac{1}{2}ct^2+bt+a)$ has constant curvature $c$.   Conversely, any curve with constant curvature is a parabola. Indeed, if $\kappa=c$, and if we write $\gamma(t)=(t,z(t))$, then $\kappa=c=z''(t)$, so $z(t)=\frac{1}{2}ct^2/2+bs+a$, $a,b,c\in\mathbb{R}$. Here, we include the case $c=0$, which corresponds to $\gamma$ being a straight-line. \qed
\end{example}

For a curve $\gamma:I\to\mathbb{E}^2$ in the Euclidean plane $\mathbb{E}^2$ parametrized by arc length $s$, the (signed) curvature is computed as $\kappa=\gamma''\cdot\mathbf{N}$, where $\mathbf{N}$ is a unit vector field normal to the curve. For a generic parameter $t$, we define the first and second fundamental forms of $\gamma$ as $g_{11}=\dot{\gamma}\cdot\dot{\gamma}$ and $h_{11}=\ddot{\gamma}\cdot\mathbf{N}$, respectively. Then, using the chain rule
\begin{equation}
    \kappa = \gamma''\cdot\mathbf{N} = \left[\left(\frac{\rmd t}{\rmd s}\right)^2\ddot{\gamma}+\frac{\rmd t}{\rmd s}\frac{\rmd}{\rmd t}\left(\frac{\rmd t}{\rmd s}\right)\dot{\gamma}\right]\cdot\mathbf{N} = \left(\frac{\rmd t}{\rmd s}\right)^2\ddot{\gamma}\cdot\mathbf{N} = \frac{h_{11}}{g_{11}}.
\end{equation}

Analogously, we define the first fundamental form for curves $\gamma(t)=(x(t),z(t))$ in $\mathbb{I}^2$ as $g_{11}=\langle\dot{\gamma},\dot{\gamma}\rangle=\dot{x}^2$. To introduce the second fundamental form, we may enforce the condition that $\kappa=h_{11}/g_{11}$:
\begin{equation*}
     \kappa = \frac{\ddot{z}\dot{x}-\dot{z}\ddot{x}}{\dot{x}^3} = \frac{1}{\dot{x}^2}\frac{\ddot{z}\dot{x}-\dot{z}\ddot{x}}{\dot{x}} \Rightarrow h_{11} = \frac{\ddot{z}\dot{x}-\dot{z}\ddot{x}}{\dot{x}}.
\end{equation*}
Note that we can not obtain the coefficient $h_{11}$ by taking an inner product  of the acceleration vector $\ddot{\gamma}$ with some normal vector. To achieve that, we may resort to the metric of the Euclidean plane. Indeed, if we define the \emph{minimal normal} vector field (see Figure \ref{fig:NminNpar})
\begin{equation}\label{min}
\mathbf{N}_{min} = \left(-\frac{\dot{z}}{\dot{x}},1\right) = \frac{1}{\dot{x}}J(\dot{\gamma}),
\end{equation}
where $J$ is the counter-clockwise (Euclidean) $\frac{\pi}{2}$-rotation on the $xz$-plane, we can finally write the second fundamental form of a curve $\gamma:I\to\mathbb{I}^2$ as
\begin{equation}
    h_{11}=\ddot{\gamma}\cdot\mathbf{N}_{min}=\det(\dot{\gamma},\ddot{\gamma}).
\end{equation}

\begin{remark}
The terminology ``minimal normal'' comes from the similar construction for surfaces in the space $\mathbb{I}^3$ \cite{KelleciJMAA2021}. More precisely, if one tries to see $-\rmd\mathbf{N}_{min}$ as a shape operator, its trace vanishes identically for \emph{any} surface in $\mathbb{I}^3$. (See the next subsection for further details on the simply isotropic geometry of surfaces.)
\end{remark}

\subsection{Simply isotropic relative geometry}

To further study the geometry of curves and surfaces in the simply isotropic space, we shall resort to some ideas from the so-called relative geometry \cite{Simon1991}, a topic whose origins can be traced back to the contributions of E. M\"uller in the 1920s \cite{MullerMonatsh}. The reader may consult \cite{Simon1991} or Sect. 2 of \cite{VerpoortAG2012} for further information.

The basic idea of relative geometry is that in Euclidean space many properties associated with the normal $\mathbf{N}$ to a hypersurface $\mathbf{x}:U\subset\mathbb{R}^n\to M^n\subset\mathbb{E}^{n+1}$ do not depend on the orthogonality. For example, the definition of the Christoffel symbols of an affine connection relies on the property that $\mathbf{N}$ is transversal to the tangent space: $\mathbf{x}_{ij}=\Gamma_{ij}^k\mathbf{x}_k+h_{ij}\mathbf{N}$. As another example, the definition of the shape operator, and consequently of the Gaussian and mean curvatures, relies on the property that $T_pM$ and $T_{\mathbf{x}(p)}\mathbb{S}^n$ are parallel and that $-\rmd\mathbf{N}$ takes values on $T_pM^2$. 

\begin{definition}
 Let $M^n\subset\mathbb{R}^{n+1}$ be a manifold and $\mathbf{y}$ a vector field along $M$  taking values in $\mathbb{R}^{n+1}$. The vector field $\mathbf{y}$ is a \emph{relative normal} of $M$ if it is (i) transversal to $T_pM$, i.e., $\mathbf{y}\not\in T_pM$ and (ii) equiaffine, i.e., $\forall\,v\in T_pM,\,\rmd\mathbf{y}(v)\in T_pM$.
\end{definition}

Let $\bar{\Sigma}^n\subset\mathbb{R}^{n+1}$ be a hypersurface with relative normal $\bar{\mathbf{y}}$.  The \emph{Peterson mapping}  of a hypersurface $M^n$ is a map $\mathcal{P}$ that sends a point $p\in M$ to a point $q\in\bar{\Sigma}^n$ such that $T_pM$ and $T_q\bar{\Sigma}$ are parallel. We may introduce a relative normal $\mathbf{y}$ for $M$ with respect to $\bar{\Sigma}$ by defining
\begin{equation*}
    \mathbf{y}(p) = \bar{\mathbf{y}}(\mathcal{P}(p)).
\end{equation*}
The \emph{relative shape operator} $A$ of $M$ is then given by $A=-\rmd\mathbf{y}$. In this work, we shall concentrate on the use of the centro-affine normal, i.e., $\bar{\mathbf{y}}$ is the position vector of $\bar{\Sigma}$. We shall refer to $\bar{\Sigma}$ as the \emph{relative sphere}. 

In the relative approach to the simply isotropic plane, we may take as the relative sphere the circle of parabolic type
\begin{equation}\label{def::UnitCircleParabolicType}
    \Sigma^1 = \left\{(x,z)\in\mathbb{I}^2:z=\frac{1}{2}-\frac{x^2}{2}\right\}.
\end{equation}
We then obtain a relative normal $\mathbf{N}_{par}$, named the {parabolic normal}, as an alternative to the minimal normal. More precisely, the \emph{parabolic normal} vector field $\mathbf{N}_{par}$ is defined as (see Figure \ref{fig:NminNpar})
\begin{equation}\label{par}
    \mathbf{N}_{par} = \left(-\frac{\dot{z}}{\dot{x}},\frac{1}{2}-\frac{\dot{z}^2}{2\dot{x}^2}\right).
\end{equation}

\begin{figure}
    \centering
    \includegraphics[width=0.4\linewidth]{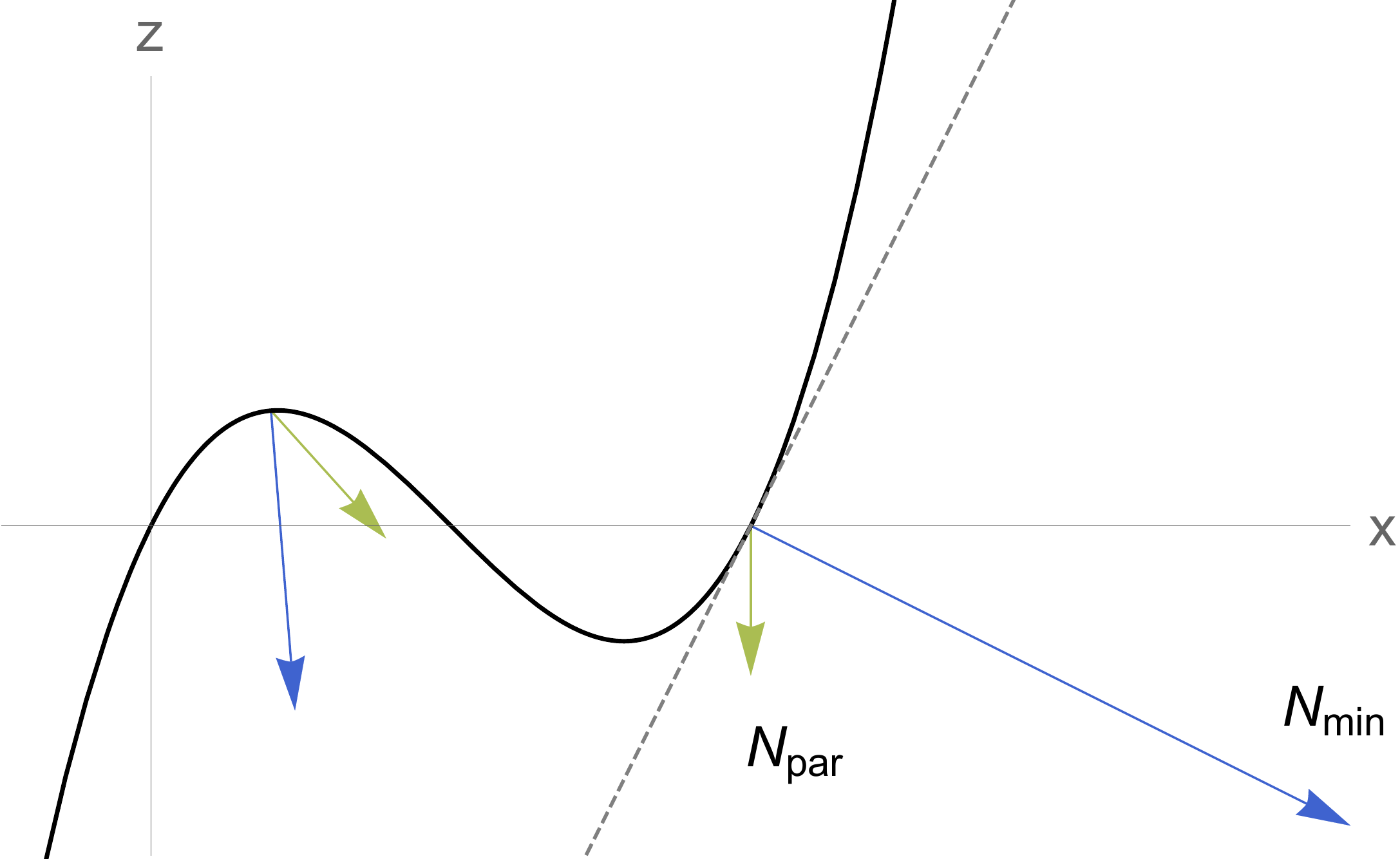}
    \caption{Minimal ($\mathbf{N}_{min}$) and parabolic ($\mathbf{N}_{par}$) normal vector fields of an admissible curve in $\mathbb{I}^2$. Note that $\mathbf{N}_{min}$, Eq. \eqref{min}, is a multiple of the Euclidean normal, while $\mathbf{N}_{par}$, Eq. \eqref{par}, does not come from a notion of orthogonality. Figure generated with Mathematica.}
    \label{fig:NminNpar}
\end{figure}

\begin{proposition}
The parabolic normal $\mathbf{N}_{par}$ of an admissible curve $\gamma:I\to\mathbb{I}^2$ is a relative normal.
\end{proposition}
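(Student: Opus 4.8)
The plan is to verify the two defining properties of a relative normal directly from the explicit formula~\eqref{par}, relying on the admissibility hypothesis (so that $\dot{x}>0$) and on the curvature formula~\eqref{curvature}. Since here $M=\gamma$ is a curve in $\mathbb{R}^2$, the tangent space $T_p\gamma$ is one-dimensional and spanned by $\dot{\gamma}=(\dot{x},\dot{z})$, which simplifies both checks considerably.

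First I would establish transversality, condition (i), by showing that $\mathbf{N}_{par}$ and $\dot{\gamma}$ are linearly independent in $\mathbb{R}^2$. This amounts to computing the (Euclidean) determinant
\begin{equation*}
\det(\dot{\gamma},\mathbf{N}_{par}) = \dot{x}\left(\frac{1}{2}-\frac{\dot{z}^2}{2\dot{x}^2}\right)+\frac{\dot{z}^2}{\dot{x}} = \frac{\dot{x}^2+\dot{z}^2}{2\dot{x}}.
\end{equation*}
Because $\dot{x}>0$, this quantity is strictly positive, hence nonzero, so $\mathbf{N}_{par}\notin\mathrm{span}\{\dot{\gamma}\}=T_p\gamma$. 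Next I would verify the equiaffine property, condition (ii). As $T_p\gamma$ is spanned by $\dot{\gamma}$ and $\rmd\mathbf{N}_{par}$ is linear, it suffices to check that $\rmd\mathbf{N}_{par}(\dot{\gamma})=\frac{\rmd}{\rmd t}\mathbf{N}_{par}$ is a scalar multiple of $\dot{\gamma}$. Differentiating the two components of~\eqref{par} and inserting the identity $\dot{x}\ddot{z}-\ddot{x}\dot{z}=\kappa\dot{x}^3$ from~\eqref{curvature}, I expect both components to collapse into
\begin{equation*}
\frac{\rmd}{\rmd t}\mathbf{N}_{par} = (-\kappa\dot{x},-\kappa\dot{z}) = -\kappa\,\dot{\gamma},
\end{equation*}
which indeed lies in $T_p\gamma$. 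This settles (ii) and, as a byproduct, identifies $\kappa$ as the eigenvalue of the relative shape operator $A=-\rmd\mathbf{N}_{par}$.

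There is no serious obstacle here: the argument reduces to a short computation once~\eqref{curvature} is used to rewrite $\dot{x}\ddot{z}-\ddot{x}\dot{z}$. The only point requiring some care is bookkeeping in the second component, where the derivative of $\dot{z}^2/(2\dot{x}^2)$ must be simplified via the chain rule; recognizing that $\frac{\rmd}{\rmd t}(\dot{z}/\dot{x})=\kappa\dot{x}$ is precisely what makes the two components align into the single multiple $-\kappa\,\dot{\gamma}$, completing the verification.
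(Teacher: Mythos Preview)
Your proposal is correct and essentially identical to the paper's own proof: both verify transversality via the determinant $\det(\dot{\gamma},\mathbf{N}_{par})=\frac{\dot{x}^2+\dot{z}^2}{2\dot{x}}\neq0$ and the equiaffine condition via $\frac{\rmd}{\rmd t}\mathbf{N}_{par}=-\kappa\,\dot{\gamma}$, merely in the opposite order.
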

\begin{proof}
We need to prove that $\mathbf{N}_{par}$ is equiaffine and transversal. The parabolic normal $\mathbf{N}_{par}$ is equiaffine:
\begin{equation*} 
-\frac{\rmd\mathbf{N}_{par}}{\rmd t} = \left(\frac{\ddot{z}\dot{x}-\dot{z}\ddot{x}}{\dot{x}^2},\frac{\dot{z}}{\dot{x}}\frac{\ddot{z}\dot{x}-\dot{z}\ddot{x}}{\dot{x}^2}\right) = \kappa\,\dot{\gamma}.
\end{equation*}
Finally, the parabolic normal $\mathbf{N}_{par}$ is transversal because
\[
\det\left(\begin{array}{cc}
    \dot{x} & \dot{z} \\
    -\frac{\dot{z}}{\dot{x}} & \frac{1}{2}-\frac{\dot{z}^2}{2\dot{x}^2}\\
\end{array}
\right) = \frac{\dot{x}^2+\dot{z}^2}{2\dot{x}}\not=0,
\]
where we used that $\dot{x}\not=0$.
\end{proof}

As a corollary from the fact that $\mathbf{N}_{par}$ is a relative normal, it follows we may alternatively compute the second fundamental form of $\gamma$ as
\begin{equation}
    h_{11} = \kappa \langle\dot{\gamma},\dot{\gamma}\rangle = \langle -\rmd\mathbf{N}_{par}(\dot{\gamma}),\dot{\gamma}\rangle.
\end{equation}

An important concept in the context of relative geometry of surfaces is played by the \emph{relative area} $A^*$. More precisely, given a surface $M^2:(u,v)\mapsto \mathbf{r}(u,v)$ with relative normal $\mathbf{y}$, we define
\begin{equation}\label{def::RelativeSurfArea}
    A^* = \int_M \vert\det(\mathbf{r}_u,\mathbf{r}_v,\mathbf{y})\vert\,\rmd u\rmd v.
\end{equation}
Analogously, we define the \emph{relative arc length} $s^*$ for a curve $\gamma:[a,b]\to\mathbb{I}^2$ with parabolic normal $\mathbf{N}_{par}$ as
\begin{equation}
    s^* = \int_a^b \vert \det(\mathbf{N}_{par},\dot{\gamma}) \vert\,\rmd t = \int_a^b \vert \mathbf{N}_{par}\cdot J(\dot{\gamma}) \vert\,\rmd t = \int_a^b \vert \mathbf{N}_{par}\cdot \mathbf{N}_{min} \vert\,\dot{x}\,\rmd t.
\end{equation}
Thus, from Eqs. \eqref{min} and \eqref{par}, the relative arc length element of a curve $\gamma(t)=(x(t),z(t))$ satisfies
\begin{equation}\label{ds}
\rmd s^*=(\mathbf{N}_{par}\cdot \mathbf{N}_{min})\, \rmd s=\left(\frac{1}{2}+\frac{\dot{z}^2}{2\dot{x}^2}\right)\, \rmd s=\left(\frac{\dot{x}}{2}+\frac{\dot{z}^2}{2\dot{x}}\right)\, \rmd t.
\end{equation}
\begin{remark}
In Euclidean plane, the relative arc length with respect to the relative normalization given by the Euclidean normal coincides with the usual arc length: $s^*=\int\det(\mathbf{N},\dot{\gamma})\,\rmd t=\int \mathbf{N}\cdot\mathbf{N}\sqrt{\dot{x}^2+\dot{z}^2}\,\rmd t = \int \rmd s=s$, where we used that $\mathbf{N}=J(\dot{\gamma})/\sqrt{\dot{\gamma}\cdot\dot{\gamma}}$ and $J(\mathbf{u})\cdot \mathbf{v}=\det(\mathbf{u},\mathbf{v})$.
\end{remark}

\subsection{Geometry of surfaces in the simply isotropic space}  
 
Let $U\subset\mathbb{R}^2$ be an open set and $\mathbf{r}:U\to M^2\subset\mathbb{I}^3$ a regular parametrized surface. In what follows, we shall focus on \emph{admissible surfaces}, that is, surfaces whose tangent planes are not isotropic. Equivalently,  $x_1^1x_2^2-x_2^1x_1^2\not=0$, where $\mathbf{r}(u^1,u^2)=(x^1(u^1,u^2),x^2(u^1,u^2),x^3(u^1,u^2))$ and $x_i^j=\partial x^j/\partial u^i$. Note that every admissible surface in $\mathbb{I}^3$ is locally parametrized as the graph of a real function $\mathbf{r}=(u^1,u^2,f(u^1,u^2))$, called the \emph{normal form} of $M^2$.

The \emph{first fundamental form} is defined as usual $g_{ij}=\langle\mathbf{r}_i,\mathbf{r}_j\rangle$, where $\mathbf{r}_i=\partial\mathbf{r}/\partial u^i$. The normal to a surface $M^2$ with respect to the simply isotropic metric is given by the isotropic vector $\mathcal{N}=(0,0,1)$ normalized by the secondary metric. The \emph{second fundamental form} $h_{ij}$ is defined by the expression
\begin{equation}
    \mathbf{r}_{ij} = \Gamma_{ij}^k\mathbf{r}_k+h_{ij}\mathcal{N},
\end{equation}
where we shall adopt the convention of summing over repeated indices. If we parametrize $M$ in its normal form, the first and second fundamental forms are given by 
\begin{equation}
     \mathrm{I}= (\rmd u^1)^2+(\rmd u^2)^2\quad\mbox{and}\quad  \mathrm{II} = f_{ij}\rmd u^i\rmd u^j.
\end{equation}

We can compute the second fundamental form $h_{ij}$ by forcing an analogy with Euclidean space:
\begin{equation*}
    h_{ij} = \frac{\partial^2\mathbf{r}}{\partial u^i\partial u^j}\cdot \mathbf{N}_{min} =\frac{\det(\mathbf{r}_1,\mathbf{r}_2,\mathbf{r}_{ij})}{\sqrt{\det g_{ij}}},
\end{equation*}
where $\mathbf{N}_{min}$ is the \emph{minimal normal}
\begin{equation}
    \mathbf{N}_{min} = \frac{\mathbf{r}_1\times\mathbf{r}_2}{\sqrt{g_{11}g_{22}-g_{12}^2}}.
\end{equation}
The map $\mathbf{N}_{min}$ is not a relative normal. Indeed, the minimal normal $\mathbf{N}_{min}$ is not equiaffine since $\rmd\mathbf{N}_{nim}$ is always a horizontal vector and, as a consequence, it generally fails to be tangent to $M^2$: 
\[
\mathbf{N}_{min}=\left(\frac{X_{23}}{X_{12}},\frac{X_{31}}{X_{12}},1\right),\quad X_{ij}=\det\left(
       \begin{array}{cc}
       x_1^i & x_1^j \\
       x_2^i & x_2^j \\
       \end{array}
       \right),
\]
where we may assume $X_{12}>0$ by exchanging $u^1\leftrightarrow u^2$ if necessary. The \emph{mean curvature} $H$ is given by 
\begin{equation}\label{mean}
    H=\frac12\frac{g_{11}h_{22}-2g_{12}h_{12}+g_{22}h_{11}}{g_{11}g_{22}-g_{12}^2}.
\end{equation}

In the simply isotropic space, we may take as the relative sphere the sphere of parabolic type
\begin{equation}\label{def::UnitSphereParabolicType}
    \Sigma^2 = \left\{(x,y,z)\in\mathbb{I}^3:z=\frac{1}{2}-\frac{x^2}{2}-\frac{y^2}{2}\right\}.
\end{equation}
We then obtain a relative normal $\mathbf{N}_{par}$, named \emph{parabolic normal}, as an alternative to the minimal normal. The \emph{parabolic normal} vector field $\mathbf{N}_{par}$ is defined as
\begin{equation}\label{par2}
    \mathbf{N}_{par} = \left(\frac{X_{23}}{X_{12}},\frac{X_{31}}{X_{12}},\frac{1}{2}-\frac{X_{23}^2+X_{31}^2}{2X_{12}^2}\right).
\end{equation}

\begin{proposition}
The parabolic normal $\mathbf{N}_{par}$ of an admissible surface $\mathbf{r}:U\to M^2\subset\mathbb{I}^3$ is a relative normal.
\end{proposition}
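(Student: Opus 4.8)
The plan is to verify the two defining properties of a relative normal (transversality and the equiaffine condition) directly from the explicit formula \eqref{par2}, mirroring the computation carried out above for the parabolic normal of a curve. Throughout I write $p=X_{23}/X_{12}$ and $q=X_{31}/X_{12}$ for the first two components, so that $\mathbf{N}_{par}=(p,q,\tfrac12-\tfrac12(p^2+q^2))$ and $\mathbf{N}_{min}=(p,q,1)$ share the same top-view projection. The single observation that organizes both checks is that the Euclidean normal of $M^2$ is $\mathbf{r}_1\times\mathbf{r}_2=(X_{23},X_{31},X_{12})=X_{12}(p,q,1)$; consequently a vector $\mathbf{w}$ lies in $T_pM^2$ exactly when $\mathbf{w}\cdot(\mathbf{r}_1\times\mathbf{r}_2)=0$, and $\{\mathbf{r}_1,\mathbf{r}_2,\mathbf{w}\}$ is a basis exactly when $\mathbf{w}\cdot(\mathbf{r}_1\times\mathbf{r}_2)\neq0$.

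For the equiaffine condition I would differentiate $\mathbf{N}_{par}$ along a coordinate direction $u^a$, obtaining
\[
\partial_a\mathbf{N}_{par}=\big(\partial_a p,\ \partial_a q,\ -p\,\partial_a p-q\,\partial_a q\big),
\]
and then pair it with the Euclidean normal. Using $X_{23}=pX_{12}$ and $X_{31}=qX_{12}$, the product $\partial_a\mathbf{N}_{par}\cdot(\mathbf{r}_1\times\mathbf{r}_2)$ collapses to $X_{12}\big(p\,\partial_a p+q\,\partial_a q-p\,\partial_a p-q\,\partial_a q\big)=0$. Hence $\partial_a\mathbf{N}_{par}\in T_pM^2$ for $a=1,2$, which is precisely equiaffineness; this is the surface analog of the identity $-\rmd\mathbf{N}_{par}/\rmd t=\kappa\dot\gamma$ from the curve case, the third component of $\mathbf{N}_{par}$ having been engineered so that its differential carries no transversal part.

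For transversality I would compute the triple product $\det(\mathbf{r}_1,\mathbf{r}_2,\mathbf{N}_{par})=(\mathbf{r}_1\times\mathbf{r}_2)\cdot\mathbf{N}_{par}$. Substituting $\mathbf{r}_1\times\mathbf{r}_2=X_{12}(p,q,1)$ gives $X_{12}\big(p^2+q^2+\tfrac12-\tfrac12(p^2+q^2)\big)=\tfrac{X_{12}}{2}(1+p^2+q^2)$, that is,
\[
\det(\mathbf{r}_1,\mathbf{r}_2,\mathbf{N}_{par})=\frac{X_{12}^2+X_{23}^2+X_{31}^2}{2X_{12}}\neq0,
\]
which is nonzero because $\mathbf{r}_1,\mathbf{r}_2$ are linearly independent (regularity, so $X_{12}^2+X_{23}^2+X_{31}^2>0$) and $X_{12}\neq0$ (admissibility). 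This is the exact analog of the determinant $\tfrac{\dot x^2+\dot z^2}{2\dot x}$ obtained for curves.

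Neither step is genuinely hard; the only things to get right are the bookkeeping with the quotients $p,q$ and their derivatives, and the recognition that tangency to $M^2$ is cleanest expressed as Euclidean-orthogonality to $\mathbf{r}_1\times\mathbf{r}_2$ rather than as an explicit expansion in $\{\mathbf{r}_1,\mathbf{r}_2\}$. I would also note that a coordinate-free argument is available: the point $\mathbf{N}_{par}(p)$ is the point of the relative sphere $\Sigma^2$ whose tangent plane is parallel to $T_pM^2$ (its top-view slope being $(p,q)$), so $\mathbf{N}_{par}$ is the value at $\mathcal{P}(p)$ of the centro-affine (position-vector) normalization of $\Sigma^2$; since the position vector is manifestly equiaffine and transversal on $\Sigma^2$, the general construction $\mathbf{y}(p)=\bar{\mathbf{y}}(\mathcal{P}(p))$ yields the claim. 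I expect the direct computation to be preferred here, being self-contained and strictly parallel to the curve proposition.
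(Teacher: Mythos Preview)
Your proposal is correct and follows essentially the same approach as the paper: both verify transversality via the triple product $\det(\mathbf{r}_1,\mathbf{r}_2,\mathbf{N}_{par})=\frac{X_{12}^2+X_{23}^2+X_{31}^2}{2X_{12}}$, and both prove equiaffineness by differentiating the explicit formula for $\mathbf{N}_{par}$. The only minor tactical difference is that the paper passes to the normal-form parametrization $\mathbf{r}=(u^1,u^2,f)$ and exhibits $-\partial_i\mathbf{N}_{par}=f_{1i}\mathbf{r}_1+f_{2i}\mathbf{r}_2$ explicitly, whereas you stay in arbitrary coordinates and check tangency via $\partial_a\mathbf{N}_{par}\cdot(\mathbf{r}_1\times\mathbf{r}_2)=0$; these are equivalent and equally short.
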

\begin{proof}
From the definition of $\mathbf{N}_{par}$ in \eqref{par2}, we have
\[
\det(\mathbf{r}_1,\mathbf{r}_2,\mathbf{N}_{par}) = \frac{(X_{23})^2+(X_{31})^2+(X_{12})^2}{2X_{12}}>0,
\]
which implies  $\mathbf{N}_{par}$ is transversal to $M^2$. Let us parametrize $M^2$ in its normal form $\mathbf{r}=(u^1,u^2,f(u^1,u^2))$. Then, the parabolic normal is given by $\mathbf{N}_{par}=(-f_1,-f_2,\frac{1}{2}-\frac{f_1^2+f_2^2}{2})$, from which follows that
\[
-\frac{\partial\mathbf{N}_{par}}{\partial u^i} = f_{1i}\,\mathbf{r}_1+f_{2i}\,\mathbf{r}_2. 
\]
Therefore, $\mathbf{N}_{par}$ is equiaffine.
\end{proof}

As a byproduct of using the parabolic normal $\mathbf{N}_{par}$, we may alternatively compute the second fundamental form and mean curvature of $M^2$ as
\begin{equation}
    h_{ij} = \langle -\rmd\mathbf{N}_{par}(\mathbf{r}_j),\mathbf{r}_i\rangle \quad \mbox{and} \quad H = \mbox{tr}(-\rmd\mathbf{N}_{par}).
\end{equation}
In addition, the \emph{Gaussian curvature} of  $M^2$ is given by $K=\det(-\rmd\mathbf{N}_{par})=\frac{\det h_{ij}}{\det g_{ij}}$.

Now, let us compute the relative area of $M^2$. From the definition \eqref{def::RelativeSurfArea}, we have
\begin{equation}\label{def::RelAreaInI3}
    A^* = \int_M \det(\mathbf{r}_1,\mathbf{r}_2,\mathbf{N}_{par})\,\rmd u^1\rmd u^2 = \int_M \mathbf{N}_{min}\cdot\mathbf{N}_{par}\,\sqrt{g_{11}g_{22}-g_{12}^2}\rmd u^1\rmd u^2 = \int_M (\mathbf{N}_{min}\cdot\mathbf{N}_{par})\rmd A.
\end{equation}
If we parametrize $M$ in its normal form, the relative area takes the form
\begin{equation}
    A^* = \int\frac{1+f_1^2+f_2^2}{2}\,\rmd u^1\rmd u^2.
\end{equation}

\section{The solution of the simply isotropic hanging chain problem}\label{s-catenary}
\label{subsect::CatenaryAsHangingChain}

In the hanging chain problem in $\mathbb{I}^2$, the {weight} of the curve is calculated using the distance to a straight-line $L$ of  $\mathbb{I}^2$. Since there are isotropic and non-isotropic  straight-lines, it will be necessary to distinguish between both cases.  Without loss of generality, if $L$  is isotropic, it will be assumed  to be the $z$-axis $L_z=\{(0,z):z\in\mathbb{R}\}$ and if $L$ is non-isotropic, then the line will be the $x$-axis $L_x=\{(x,0):x\in\mathbb{R}\}$. In each case, the functional 
\begin{equation}
    \mathcal{F}[\gamma]= \int_\gamma \mbox{dist}(\gamma(x),L)\, \rmd s^*-\lambda \int_\gamma\, \rmd s^*
\end{equation}
will be denoted by $\mathcal{F}_z$ and $\mathcal{F}_x$. In addition, all curves will be contained in one of the two half-planes determined by $L_z$  and $L_x$, namely, $ \{(x,z)\in\mathbb{R}^2:x>0\}$ and $ \{(x,z)\in\mathbb{R}^2:z>0\}$, respectively. 

The first case to consider is when the reference line is the isotropic line $L_z$.  
\begin{theorem}\label{t1} 
The critical points of the functional $\mathcal{F}_z$ are the curves 
\begin{equation}\label{cate}
   \gamma(t) = (t,c\ln( t-\lambda)+d),
\end{equation}
where $c,d\in\mathbb{R}$.
\end{theorem}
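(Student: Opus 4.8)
The plan is to set up the weight functional $\mathcal{F}_z$ explicitly as a one-dimensional variational problem and apply the Euler--Lagrange machinery. Since the reference line is the isotropic $z$-axis $L_z$, the distance from a point $\gamma(x)=(x,z(x))$ to $L_z$ in the simply isotropic metric is simply $\mbox{dist}(\gamma(x),L_z)=|x|=x$ (assuming we work in the half-plane $x>0$). Combining this with the relative arc length element from Eq.~\eqref{ds}, namely $\rmd s^* = \tfrac{1}{2}(\dot{x}+\dot{z}^2/\dot{x})\,\rmd t$, the functional becomes
\begin{equation*}
\mathcal{F}_z[\gamma]=\int (x-\lambda)\left(\frac{\dot{x}}{2}+\frac{\dot{z}^2}{2\dot{x}}\right)\rmd t.
\end{equation*}
The first step is to exploit the freedom in parametrization: choosing the graph parametrization $\gamma(t)=(t,z(t))$ with $x=t$, $\dot{x}=1$, reduces the integrand to the Lagrangian $\mathcal{L}(t,z,\dot{z})=(t-\lambda)\tfrac{1}{2}(1+\dot{z}^2)$, a clean functional depending on $t$ and $\dot{z}$ but \emph{not} explicitly on $z$.

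Because $\mathcal{L}$ has no explicit $z$-dependence, the second step is to observe that the Euler--Lagrange equation $\tfrac{\rmd}{\rmd t}(\partial\mathcal{L}/\partial\dot{z})-\partial\mathcal{L}/\partial z=0$ reduces to a conservation law: $\partial\mathcal{L}/\partial\dot{z}=(t-\lambda)\dot{z}$ is constant along critical curves. Setting $(t-\lambda)\dot{z}=c$ for a constant $c$ gives the first-order ODE $\dot{z}=c/(t-\lambda)$, which integrates directly to $z(t)=c\ln(t-\lambda)+d$. This reproduces exactly the claimed form \eqref{cate}, so the bulk of the argument is the straightforward integration of a separable equation once the cyclic-variable structure is recognized.

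I would then verify that the additive constant $-\lambda\int\rmd s^*$ and the treatment of the Lagrange multiplier are handled correctly: the $\lambda$ here plays the dual role of enforcing the length (relative arc length) constraint and appearing as a horizontal shift $t-\lambda$ in the solution, which merits a sentence of comment to avoid confusion, since in the Euclidean catenary the multiplier enters differently. A subtle point worth checking is whether one should parametrize by $x=t$ at the outset or keep a general parametrization and verify reparametrization-invariance of the critical-point condition; the former is cleaner, but I would note that admissibility ($\dot{x}>0$) guarantees the graph parametrization is legitimate.

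The main obstacle I anticipate is not the computation itself but the \emph{justification that the graph parametrization loses no generality} and that the resulting first integral captures all critical points rather than a special subfamily. One must confirm that the functional $\mathcal{F}_z$ is genuinely parametrization-independent (which it should be, since both $\mbox{dist}(\cdot,L_z)$ and $\rmd s^*$ are geometric), so that restricting to $x=t$ is harmless. A secondary delicate point is the domain restriction $x>0$ together with $t-\lambda>0$ needed for $\ln(t-\lambda)$ to be defined; I would remark that the solution is only valid where the chain lies strictly on one side of the reference line, consistent with the half-plane assumption stated before the theorem.
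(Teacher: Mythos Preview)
Your proposal is correct and follows essentially the same approach as the paper: parametrize the curve as a graph $\gamma(t)=(t,z(t))$, write the Lagrangian $F(t,z,\dot z)=\tfrac{1}{2}(t-\lambda)(1+\dot z^2)$, observe that it is independent of $z$ so that the Euler--Lagrange equation reduces to the first integral $(t-\lambda)\dot z=c$, and integrate. Your additional remarks on parametrization-invariance and the domain restriction $t>\lambda$ are sound but go beyond what the paper itself includes.
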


From now on, the curves \eqref{cate} when $c\not=0$ will be called \emph{isotropic catenaries} with respect to $L_z$.

 \begin{proof}

Let us consider the parametrization $\gamma(t)=(t,z(t))$. Using \eqref{ds}, the relative arc length element becomes
\begin{equation*}
    ds^* =  \left(\frac{{1}}{2}+\frac{\dot{z}^2}{2}\right)\rmd t.
\end{equation*}
The functional $\mathcal{F}_z$  writes as  
\begin{equation*}
    \mathcal{F}_z[\gamma] =  \int_a^b(t-\lambda)\left(\frac{{1}}{2}+\frac{\dot{z}^2}{2}\right)\rmd t.
\end{equation*}
The Euler-Lagrange equation is 
\begin{equation}\label{el}
\frac{\partial F}{\partial z}-\frac{\rmd}{\rmd t}\left(\frac{\partial F}{\partial \dot{z}}\right)=0,
\end{equation}
where 
$$ F(t,z,\dot{z})= (t-\lambda)\left(\frac{{1}}{2}+\frac{\dot{z}^2}{2}\right).$$
Notice that $F$ does not depend on $z$. Thus, the Euler-Lagrange equation leads to
\begin{equation}\label{cate3}
0=\frac{\rmd}{\rmd t}\left((t-\lambda)\dot{z}\right)=\dot{z}+(t-\lambda)\ddot{z}.
\end{equation}
Then, there exists $c\in\mathbb{R}$ such that $(t-\lambda)\dot{z}=c$. If $c=0$, then $z$ is a constant function, $z(t)=d$, $d\in\mathbb{R}$. This is a particular case of \eqref{cate}.  If $c\not=0$, then a direct integration gives Eq. \eqref{cate}.
\end{proof}

 Isotropic catenaries with respect to $L_z$ can be also characterized as solutions of a coordinate-free prescribed curvature problem involving the curvature $\kappa$ of $\gamma$, the parabolic normal vector ${\bf N}_{par}$, and the unit vector field of $\mathbb{I}^2$ which is orthogonal to $L_z$. Let $X=\partial_x\in\mathfrak{X}(\mathbb{I}^2)$.  

\begin{theorem}\label{t2} Let $\gamma$ be a curve in $\mathbb{I}^2$. Then, $\gamma$ is an isotropic catenary with respect to $L_z$ if, and only if, its curvature $\kappa$ satisfies
\begin{equation}\label{cate2}
\kappa=\frac{\langle\mathbf{N}_{par},X\rangle}{\langle\gamma,X\rangle-\lambda}.
\end{equation}
\end{theorem}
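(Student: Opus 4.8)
The plan is to show that equation \eqref{cate2}, once written in a convenient parametrization, reduces \emph{exactly} to the Euler--Lagrange equation \eqref{cate3} derived in the proof of Theorem \ref{t1}; the equivalence with being an isotropic catenary then follows immediately from that theorem. The key preliminary observation is that every quantity appearing in \eqref{cate2} is geometric, hence independent of the chosen parametrization: the curvature $\kappa$ is a reparametrization invariant; the parabolic normal $\mathbf{N}_{par}$ of \eqref{par} depends only on the slope $\dot z/\dot x$ of the tangent line, so it is invariant under orientation-preserving reparametrizations; and $\langle\gamma,X\rangle$ is just the value at $\gamma(t)$ of a fixed function on $\mathbb{I}^2$. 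Consequently it suffices to verify the identity in the graph parametrization, and since both sides are moreover invariant under the admissible rigid motions $(x,z)\mapsto(\pm x+a,z)$, I may assume $\gamma(t)=(t,z(t))$ with $\dot x=1$.

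In this parametrization I would simply compute the two sides of \eqref{cate2}. On the left, \eqref{curvature} with $\dot x=1$, $\ddot x=0$ gives $\kappa=\ddot z$. On the right, using $X=\partial_x=(1,0)$ together with the degenerate metric $\langle\mathbf{u},\mathbf{v}\rangle=u^1v^1$, the first components of $\mathbf{N}_{par}$ in \eqref{par} and of $\gamma$ yield
\begin{equation*}
\langle\mathbf{N}_{par},X\rangle=-\frac{\dot z}{\dot x}=-\dot z,
\qquad
\langle\gamma,X\rangle-\lambda=t-\lambda .
\end{equation*}
Hence \eqref{cate2} is equivalent to $\ddot z=-\dot z/(t-\lambda)$, that is, to $\dot z+(t-\lambda)\ddot z=0$, which is precisely \eqref{cate3}.

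To conclude, I would invoke Theorem \ref{t1}: the solutions of \eqref{cate3} are exactly the curves \eqref{cate}, so $\gamma$ satisfies \eqref{cate2} if and only if it has the form \eqref{cate}, i.e.\ it is an isotropic catenary with respect to $L_z$ (up to the degenerate horizontal lines $c=0$). I expect the only genuine subtlety to be this degenerate case: when $z$ is constant both sides of \eqref{cate2} vanish identically, so a horizontal line formally satisfies the equation although it is not a proper catenary; this must be flagged when stating the equivalence. The remaining care is in the clean justification of parametrization invariance, which legitimizes the reduction to $\gamma(t)=(t,z(t))$; once that reduction is in place the computation itself is entirely routine.
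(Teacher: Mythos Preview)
Your proposal is correct and follows essentially the same route as the paper: both arguments work in the graph parametrization $\gamma(t)=(t,z(t))$, compute $\kappa=\ddot z$, $\langle\mathbf{N}_{par},X\rangle=-\dot z$, and $\langle\gamma,X\rangle-\lambda=t-\lambda$, thereby identifying \eqref{cate2} with \eqref{cate3}. Your added remarks on parametrization invariance and on the degenerate horizontal-line case $c=0$ are sound refinements that the paper leaves implicit.
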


\begin{proof} From Eq.  \eqref{curvature}, the curvature of $\gamma(t)=(t,z(t))$ is $\kappa=\ddot{z}$. This implies that Eq. \eqref{cate3} writes simply as 
$$\kappa=-\frac{\dot{z}}{t-\lambda}.$$
Finally, from Eq. \eqref{min}, we have $\mathbf{N}_{par}=(-\dot{z},\frac{1}{2}-\frac{\dot{z}^2}{2})$, hence  $\langle\mathbf{N}_{par},X\rangle=- \dot{z}$.  These expressions prove the validity of Eq. \eqref{cate2}.
\end{proof}

Naturally, the solutions of Eq. \eqref{cate3} are given by Eq. \eqref{cate}. The notion of catenary can be generalized if it is introduced a power $\alpha\in\mathbb{R}$ in the functional $\mathcal{F}_z$. More precisely, define   the functional
\begin{equation*}
    \mathcal{F}_z^\alpha[\gamma] = \int_\gamma (z^\alpha-\lambda)\, \rmd s^*.
    \end{equation*}
  \begin{theorem} \label{t3}
 Let $\gamma$ be a curve in $\mathbb{I}^2$. Then, $\gamma$ is a critical point of $\mathcal{F}_z^\alpha$ if, and only if, its curvature $\kappa$ satisfies
\begin{equation}\label{cate4}
\kappa=\alpha\frac{\langle\gamma,X\rangle^{\alpha-1}\langle\mathbf{N}_{par},X\rangle}{\langle\gamma,X\rangle^\alpha-\lambda},
\end{equation}
 \end{theorem}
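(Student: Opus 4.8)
The plan is to mirror the proofs of Theorems \ref{t1} and \ref{t2}: reduce the variational problem to a first-order Euler--Lagrange equation exploiting that the Lagrangian is autonomous in $z$, and then recast that equation in coordinate-free form via the curvature. First I would parametrize $\gamma(t)=(t,z(t))$, so that $\dot{x}=1$ and the distance to $L_z$ equals $\langle\gamma,X\rangle=t$; raising it to the power $\alpha$ gives the weight $t^\alpha$. By \eqref{ds} the relative arc length element is $\rmd s^*=(\frac12+\frac{\dot z^2}{2})\,\rmd t$, so the functional becomes
\begin{equation*}
\mathcal{F}_z^\alpha[\gamma]=\int_a^b (t^\alpha-\lambda)\left(\frac12+\frac{\dot z^2}{2}\right)\rmd t,
\end{equation*}
with Lagrangian $F(t,z,\dot z)=(t^\alpha-\lambda)(\frac12+\frac{\dot z^2}{2})$.

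Next, observing that $F$ is independent of $z$, the Euler--Lagrange equation \eqref{el} collapses to $\frac{\rmd}{\rmd t}(\partial F/\partial\dot z)=0$, i.e.
\begin{equation*}
0=\frac{\rmd}{\rmd t}\big[(t^\alpha-\lambda)\dot z\big]=\alpha t^{\alpha-1}\dot z+(t^\alpha-\lambda)\ddot z.
\end{equation*}
This equation characterizes the critical points of $\mathcal{F}_z^\alpha$, and conversely any curve satisfying it is a critical point; this is what secures the ``if and only if''.

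Finally, I would translate the ODE into \eqref{cate4}. Since $\dot x=1$, equation \eqref{curvature} gives $\kappa=\ddot z$; from \eqref{par} the parabolic normal is $\mathbf{N}_{par}=(-\dot z,\frac12-\frac{\dot z^2}{2})$, whence $\langle\mathbf{N}_{par},X\rangle=-\dot z$; and $\langle\gamma,X\rangle=t$. Solving the Euler--Lagrange equation for $\ddot z$ and inserting these identities yields
\begin{equation*}
\kappa=-\frac{\alpha t^{\alpha-1}\dot z}{t^\alpha-\lambda}=\alpha\frac{\langle\gamma,X\rangle^{\alpha-1}\langle\mathbf{N}_{par},X\rangle}{\langle\gamma,X\rangle^\alpha-\lambda},
\end{equation*}
which is exactly \eqref{cate4}.

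I do not expect a genuine obstacle, since the heart of the argument is the same autonomy-in-$z$ reduction already used in Theorem \ref{t1}; the computation only generalizes the factor $t-\lambda$ to $t^\alpha-\lambda$. The two points deserving care are that the reduction relies on the weight depending only on the distance to $L_z$ (hence only on $t$, making $F$ independent of $z$), and that passing to the quotient form \eqref{cate4} tacitly assumes $\langle\gamma,X\rangle^\alpha\neq\lambda$ along $\gamma$. Setting $\alpha=1$ recovers Theorems \ref{t1} and \ref{t2}, which serves as a consistency check.
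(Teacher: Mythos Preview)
Your proposal is correct and follows essentially the same approach as the paper: parametrize $\gamma(t)=(t,z(t))$, write the functional as $\int_a^b(t^\alpha-\lambda)\bigl(\tfrac12+\tfrac{\dot z^2}{2}\bigr)\rmd t$, use that the Lagrangian is independent of $z$ to reduce the Euler--Lagrange equation to $\alpha t^{\alpha-1}\dot z+(t^\alpha-\lambda)\ddot z=0$, and then identify $\kappa=\ddot z$, $\langle\mathbf{N}_{par},X\rangle=-\dot z$, $\langle\gamma,X\rangle=t$. The paper's proof is in fact terser than yours---it stops at the ODE and defers the coordinate-free translation to the analogous computation in Theorem~\ref{t2}---so your version is a slightly more explicit rendering of the same argument.
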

\begin{proof}
The computations are similar to those in the proof of Theorem \ref{t2}. If $\gamma(t)=(t,z(t))$, then the functional $\mathcal{F}_z^\alpha$ becomes 
\begin{equation*}
    \mathcal{F}_z^\alpha[\gamma] = \int_a^b  \left(t^{\alpha}-\lambda\right)\left(\frac{{1}}{2}+\frac{\dot{z}^2}{2}\right)\rmd t.
    \end{equation*}
The computation of the Euler-Lagrange equation \eqref{el} gives 
\begin{equation}\label{cate42}
\alpha t^{\alpha-1}\dot{z}+(t^\alpha-\lambda)\ddot{z}=0,
\end{equation} 
proving the theorem.
\end{proof}

By analogy with the Euclidean space \cite{Dierkes1990,LopezAGAG2018}, and taking $\lambda=0$ in Eq. \eqref{cate4}, we give the following definition: 
 
 \begin{definition} A curve $\gamma$ in $\mathbb{I}^2$ is said to be an \emph{isotropic $\alpha$-catenary} with respect to $L_z$ if its curvature satisfies 
 \[ \kappa=\alpha\frac{  \langle\mathbf{N}_{par},X\rangle}{\langle\gamma,X\rangle  }.\]
 \end{definition}

The case $\alpha=1$ corresponds to the isotropic catenary \eqref{cate}. In fact, Eq. \eqref{cate42} which characterizes $\alpha$-catenaries can be solved and the solution provides an explicit parametrization. Indeed,
  
\begin{corollary} 
Let $\gamma$ be an $\alpha$-catenary. Then, it is parametrized as
  \begin{equation}\label{acate1}
     \gamma(t) = \begin{cases}
         (t,c\ln t+d ), & \text{if} \quad \alpha=1  \\
         (t,c\,t^{1-\alpha} +d), & \text{if} \quad \alpha\not=0 
      \end{cases},
  \end{equation}
where $c,d\in\mathbb{R}$.
\end{corollary}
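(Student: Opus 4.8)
The plan is to integrate directly the ordinary differential equation that defines an $\alpha$-catenary. By definition, this is Eq. \eqref{cate4} with $\lambda=0$, i.e. $\kappa=\alpha\langle\mathbf{N}_{par},X\rangle/\langle\gamma,X\rangle$. Writing $\gamma(t)=(t,z(t))$ and recalling from the proof of Theorem \ref{t2} that $\kappa=\ddot z$ and $\langle\mathbf{N}_{par},X\rangle=-\dot z$, together with $\langle\gamma,X\rangle=t$ (immediate from the metric and $X=\partial_x$), the defining condition becomes exactly Eq. \eqref{cate42} with $\lambda=0$, namely $\alpha t^{\alpha-1}\dot z+t^{\alpha}\ddot z=0$. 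Since all admissible curves are taken in the half-plane $x>0$, we have $t>0$ throughout, and dividing by $t^{\alpha-1}$ reduces this to the Cauchy--Euler-type equation $t\ddot z+\alpha\dot z=0$.

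Next I would perform a double integration. Viewing the equation as first order in $w=\dot z$, separation of variables gives $\dot w/w=-\alpha/t$, so that $\dot z=w=C_0\,t^{-\alpha}$ for some constant $C_0\in\mathbb{R}$. A second integration then splits according to the exponent $-\alpha$: if $\alpha=1$, the antiderivative of $t^{-1}$ is $\ln t$, yielding $z(t)=C_0\ln t+d$; if $\alpha\neq1$, integrating $t^{-\alpha}$ gives $z(t)=\frac{C_0}{1-\alpha}\,t^{1-\alpha}+d$. Relabelling the integration constants as $c$ (equal to $C_0$ in the first case and to $C_0/(1-\alpha)$ in the second) and $d$ reproduces precisely the two branches of Eq. \eqref{acate1}.

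I expect no genuine obstacle: the equation is linear and explicitly integrable, and the resulting two-parameter general solution matches the two free constants $c,d$ in the statement, so the parametrization is exhaustive. The only point deserving care is the case distinction at the second integration, forced by the pole of $t^{-\alpha}$ at $\alpha=1$; I would note in passing that this boundary case recovers the isotropic catenary of Theorem \ref{t1}, consistent with the comment that $\alpha=1$ corresponds to \eqref{cate}. I would also observe that the second branch is valid for all $\alpha\neq1$ (in particular the straight-line case $\alpha=0$ is subsumed there), so that the side condition on the second case is most naturally stated as $\alpha\neq1$ rather than $\alpha\neq0$.
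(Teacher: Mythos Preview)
Your proposal is correct and follows exactly the approach the paper intends: the paper simply remarks that Eq.~\eqref{cate42} ``can be solved and the solution provides an explicit parametrization'' and states the corollary without further detail, and your double integration of $t\ddot z+\alpha\dot z=0$ is precisely that omitted computation. Your closing observation that the second branch is naturally stated for $\alpha\neq1$ rather than $\alpha\neq0$ is also apt, since $t^{1-\alpha}$ degenerates to $\ln t$ only at $\alpha=1$.
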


We conclude this section by investigating the hanging chain problem when the reference line is   the non-isotropic line $L_x$. In this case, we must resort to the distance as computed using the secondary metric: $d(\bf{x},\bf{y})=\sqrt{\llangle\bf{x}-\bf{y},\bf{x}-\bf{y}\rrangle}$. Thus, the functional to minimize is
\begin{equation}\label{fx}
    \mathcal{F}_x^{\alpha} = \int_{\gamma}(z^{\alpha}-\lambda)\,\rmd s^*.
\end{equation}
Now, the computations follow the same steps as in Theorems \ref{t1} and \ref{t2}. Let $Z=\partial_z\in\mathfrak{X}(\mathbb{I}^2)$.

\begin{theorem}\label{t12} Let $\gamma$ be a curve in $\mathbb{I}^2$ parametrized by $\gamma(t)=(t,z(t))$. The following statements are equivalent:
\begin{enumerate}
\item The curve $\gamma$ is a critical point of the functional $\mathcal{F}_x^{\alpha}$;
\item The function $z$ satisfies 
\begin{equation}\label{cate5}
(z^{\alpha}-\lambda)\ddot{z} = \alpha z^{\alpha-1}\left(\frac{1}{2}-\frac{\dot{z}^2}{2}\right),
\end{equation}
\item The curvature $\kappa$ satisfies
$$  \kappa=\alpha\llangle \gamma,Z\rrangle^{\alpha-1}\frac{ \llangle \mathbf{N}_{par},Z\rrangle}{\llangle \gamma,Z\rrangle^{\alpha}-\lambda}. $$

\end{enumerate}
\end{theorem}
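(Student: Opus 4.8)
The plan is to establish the three-way equivalence by proving the chain $(1)\Leftrightarrow(2)\Leftrightarrow(3)$, with the variational computation carrying the first link and a direct translation of the resulting differential equation into geometric invariants carrying the second. Throughout I would fix the graph parametrization $\gamma(t)=(t,z(t))$ so that $\dot{x}=1$, which by Eq. \eqref{ds} collapses the relative arc length element to $\rmd s^*=\left(\frac{1}{2}+\frac{\dot{z}^2}{2}\right)\rmd t$, exactly as in the proof of Theorem \ref{t1}.

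First I would handle $(1)\Leftrightarrow(2)$. Substituting $\rmd s^*$ into Eq. \eqref{fx} turns the functional into $\mathcal{F}_x^{\alpha}[\gamma]=\int_a^b (z^{\alpha}-\lambda)\left(\frac{1}{2}+\frac{\dot{z}^2}{2}\right)\rmd t$, with Lagrangian $F(t,z,\dot{z})=(z^{\alpha}-\lambda)\left(\frac{1}{2}+\frac{\dot{z}^2}{2}\right)$. The crucial structural difference from Theorem \ref{t1} is that here the distance to $L_x$ is measured by the secondary metric and equals $z$, so $F$ depends explicitly on $z$ and the term $\partial F/\partial z$ no longer vanishes. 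I would compute $\partial F/\partial z=\alpha z^{\alpha-1}\left(\frac{1}{2}+\frac{\dot{z}^2}{2}\right)$ together with $\frac{\rmd}{\rmd t}\!\left(\partial F/\partial\dot{z}\right)=\alpha z^{\alpha-1}\dot{z}^2+(z^{\alpha}-\lambda)\ddot{z}$; plugging these into the Euler-Lagrange equation \eqref{el} and cancelling the $\dot{z}^2$ contributions (where $\frac{\dot z^2}{2}-\dot z^2=-\frac{\dot z^2}{2}$) yields precisely Eq. \eqref{cate5}. Since every step is reversible, this is an equivalence.

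Next I would establish $(2)\Leftrightarrow(3)$ by rewriting Eq. \eqref{cate5} in terms of geometric quantities. With $\dot{x}=1$ one has $\kappa=\ddot{z}$ from Eq. \eqref{curvature}, and from Eq. \eqref{par} the parabolic normal is $\mathbf{N}_{par}=\left(-\dot{z},\frac{1}{2}-\frac{\dot{z}^2}{2}\right)$. Reading off the secondary-metric pairings against $Z=\partial_z=(0,1)$ gives $\llangle\gamma,Z\rrangle=z$ and $\llangle\mathbf{N}_{par},Z\rrangle=\frac{1}{2}-\frac{\dot{z}^2}{2}$. Dividing Eq. \eqref{cate5} through by $z^{\alpha}-\lambda$ and substituting these identifications turns it directly into statement (3); the manipulation is again reversible away from the zero set of $z^{\alpha}-\lambda$, closing the equivalence.

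There is no genuinely hard step here, as the content is essentially bookkeeping. The one point that requires care — and the only place the argument differs qualitatively from the isotropic case $L_z$ treated in Theorems \ref{t1} and \ref{t2} — is recognizing that the reference line $L_x$ forces the weight to be measured with the secondary metric $\llangle\cdot,\cdot\rrangle$ rather than the degenerate primary metric. This is exactly what makes the Lagrangian $z$-dependent and produces the extra $\alpha z^{\alpha-1}$ factors, so I expect the main obstacle to be purely computational: getting the signs right in the cancellation of the $\dot{z}^2$ terms during the Euler-Lagrange step, which is the most error-prone part of the derivation.
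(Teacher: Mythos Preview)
Your proposal is correct and follows essentially the same route as the paper: write the functional with Lagrangian $F=(z^{\alpha}-\lambda)\bigl(\tfrac{1}{2}+\tfrac{\dot z^2}{2}\bigr)$, compute the Euler--Lagrange equation to obtain Eq.~\eqref{cate5}, and then identify $\kappa=\ddot z$, $\llangle\gamma,Z\rrangle=z$, and $\llangle\mathbf{N}_{par},Z\rrangle=\tfrac{1}{2}(1-\dot z^2)$ to reach statement~(3). The only cosmetic difference is that the paper presents the Euler--Lagrange identity in the form $\alpha z^{\alpha-1}\tfrac{1+\dot z^2}{2}=\tfrac{\rmd}{\rmd t}\bigl[(z^{\alpha}-\lambda)\dot z\bigr]$ before simplifying, whereas you compute the two partials separately; the content is identical.
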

These curves will be called  \emph{isotropic $\alpha$-catenaries} with respect to $L_x$.

 \begin{proof} 
The functional $\mathcal{F}_x^{\alpha}$ in \eqref{fx}  is
\begin{equation*}
    \mathcal{F}_x^{\alpha}[\gamma] = \int_{\gamma}(z^{\alpha}-\lambda)\,\rmd s^* = \int_a^b(z^{\alpha}-\lambda) \left(\frac{1}{2}+\frac{\dot{z}^2}{2}\right)\rmd t.
\end{equation*}
The corresponding Euler-Lagrange equation is 
\begin{equation}
\alpha z^{\alpha-1}\frac{1+\dot{z}^2}{2} = \frac{\rmd}{\rmd t}\Big[(z^{\alpha}-\lambda)\dot{z}\Big] = \alpha z^{\alpha-1}\dot{z}^2+(z^{\alpha}-\lambda)\ddot{z},
\end{equation}
which finally gives
\begin{equation}\label{cate66}
(z^{\alpha}-\lambda)\ddot{z}=\alpha z^{\alpha-1}\left(\frac{1}{2}-\frac{\dot{z}^2}{2}\right) \Rightarrow \ddot{z} = \alpha z^{\alpha-1}\frac{\frac{1}{2}-\frac{\dot{z}^2}{2}}{z^{\alpha}-\lambda}.
\end{equation}
Since $\llangle\mathbf{N}_{par},Z\rrangle=\llangle(-\dot{z},\frac{1}{2}-\frac{\dot{z}^2}{2}),(0,1)\rrangle=\frac{1}{2}(1-\dot{z}^2)$, $\llangle\gamma,Z\rrangle=z$, and the curvature is $\kappa=\ddot{z}$, we finally obtain the desired chain of equivalences.
\end{proof}

\begin{remark}
It is worth mentioning that there is an isometry between the simply isotropic plane $\mathbb{I}^2$ and $\mathbb{R}^2$ equipped with $\llangle\cdot,\cdot\rrangle$ and with $\langle\cdot,\cdot\rangle$ as the secondary metric. (The latter has the $x$-axis as an isotropic direction while in the former this role is played by the $z$-axis.) Therefore, it comes as no surprise the similarity between Theorems \ref{t2} and \ref{t3}, where distances are measured using $\langle\cdot,\cdot\rangle$, and Theorem \ref{t12}, where distances are measured using $\llangle\cdot,\cdot\rrangle$.
\end{remark}


\section{The catenary as the generating curve of minimal surfaces of revolution}\label{s-catenary2}

In this section, it  will be derived that Euclidean rotational minimal surfaces in $\mathbb{I}^3$, i.e., surfaces in $\mathbb{I}^3$ rotated around an isotropic axis, are generated by isotropic catenaries. This will extend Euler's result {\cite{BarbosaColares1986,euler}}  to the ambient of simply isotropic spaces.   Let $L_z$ be the isotropic $z$-axis. The one-parameter group $\mathcal{G}$ of rotations that leave $L_z$ point-wise fixed is $\mathcal{G}=\{\mathcal{R}_\theta:\theta\in\mathbb{R}\}$, where {\cite{daSilvaMJOU2021,Sachs1990}}
$$
\mathcal{R}_\theta = \left(
                      \begin{array}{ccc}
                      \cos\theta  & -\sin\theta & 0 \\
                      \sin\theta & \cos\theta & 0 \\
                      0 & 0 & 1
                      \end{array}
                      \right).
$$
Let $I=[a,b]\subset\mathbb{R}$, $a>0$, and let $\gamma: I\to \Pi_{xz}\subset\mathbb{R}^3$ be a smooth curve parametrized by $\gamma(t)=(t,0,z(t))$, $t>0$, where $\Pi_{xz}$ is the isotropic plane of equation $y=0$. Let $S_\gamma=\{\mathcal{R}_\theta\cdot\gamma(t):t\in I, \theta\in\mathbb{R}\}$ be the surface in $\mathbb{I}^3$ obtained by rotating $\gamma$ around the isotropic line $L_z$. 
A parametrization of $S_\gamma$ is  
\begin{equation}\label{p-sms}
    \mathbf{r}(t,\theta) =(t\cos\theta,t\sin\theta,z(t)),\quad t\in I, \theta\in\mathbb{R}.
\end{equation}
If $\mathbf{N}_{min}$ and $\mathbf{N}_{par}$ are the minimal and parabolic normal of $S_\gamma$, the relative area \eqref{def::RelAreaInI3} is given by
\begin{equation*}
    \rmd A^* = (\mathbf{N}_{par}\cdot\mathbf{N}_{min})\,\rmd A. 
\end{equation*}
Thus, the problem of minimum area for surfaces of revolution consists in   minimizing the relative area
$$\mathcal{A}[\gamma]= \int_{a}^{b}\int_{0}^{2\pi}\rmd A^* $$
among all curves $\gamma(t)=(t,0,z(t))$ for which $z(a)=r_1$ and $z(b)=r_2$ are fixed. Here, $r_1$ and $r_2$ are the radii of the circles forming the boundary of $S_\gamma$. {(The area $\int\rmd A$ must be replaced by the relative area to obtain surfaces with $H=0$ \cite{daSilvaJG2021}.)}

\begin{theorem}\label{t4} 
If $S_\gamma$ is a surface of minimum (relative) area, then  $\gamma$ is a horizontal line (and $S_\gamma$ is a horizontal plane) or $\gamma$ is an isotropic catenary with respect to $L_z$ given in Eq. \eqref{cate} for $\lambda=0$. 
\end{theorem}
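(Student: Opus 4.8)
The plan is to reduce the minimization of the relative area to a one-dimensional variational problem in $z(t)$ and to recognize that problem as the $\lambda=0$ instance of the hanging-chain functional of Theorem~\ref{t1}. First I would compute the integrand of $\mathcal{A}[\gamma]$ directly from the parametrization \eqref{p-sms}, using the relation $\rmd A^*=(\mathbf{N}_{par}\cdot\mathbf{N}_{min})\,\rmd A$ stated just before the theorem. From $\mathbf{r}_t=(\cos\theta,\sin\theta,\dot{z})$ and $\mathbf{r}_\theta=(-t\sin\theta,t\cos\theta,0)$ one gets $g_{11}=1$, $g_{12}=0$, $g_{22}=t^2$, so $\rmd A=t\,\rmd t\,\rmd\theta$; and the quantities $X_{ij}$ of \eqref{par2} evaluate to $X_{12}=t$, $X_{23}=-t\dot{z}\cos\theta$, $X_{31}=-t\dot{z}\sin\theta$, giving $\mathbf{N}_{min}=(-\dot{z}\cos\theta,-\dot{z}\sin\theta,1)$ and $\mathbf{N}_{par}=(-\dot{z}\cos\theta,-\dot{z}\sin\theta,\frac{1}{2}-\frac{\dot{z}^2}{2})$. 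Their Euclidean product is $\mathbf{N}_{min}\cdot\mathbf{N}_{par}=\frac{1}{2}+\frac{\dot{z}^2}{2}$, whence $\rmd A^*=\frac{t(1+\dot{z}^2)}{2}\,\rmd t\,\rmd\theta$.

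Integrating over $\theta\in[0,2\pi]$ then produces $\mathcal{A}[\gamma]=\pi\int_a^b t\,(1+\dot{z}^2)\,\rmd t$, which up to the constant factor $2\pi$ is exactly the relative-weight functional $\mathcal{F}_z$ of Theorem~\ref{t1} evaluated at $\lambda=0$, since $(t-\lambda)\big(\frac{1}{2}+\frac{\dot{z}^2}{2}\big)$ collapses to $\frac{t(1+\dot{z}^2)}{2}$ when $\lambda=0$. This identification is the conceptual heart of the statement: revolving a planar profile about the isotropic axis reproduces verbatim the variational problem that defines the isotropic catenary, which is the promised simply isotropic counterpart of Euler's theorem. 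From here the argument is the same Euler--Lagrange computation as in Theorem~\ref{t1}: the integrand $F(t,z,\dot{z})=t(1+\dot{z}^2)$ is independent of $z$, so \eqref{el} gives the first integral $\frac{\rmd}{\rmd t}(t\dot{z})=0$, i.e. $t\dot{z}=c$. If $c=0$ then $z$ is constant, so $\gamma$ is a horizontal line and $S_\gamma$ a horizontal plane; if $c\neq 0$ then integrating $\dot{z}=c/t$ yields $z(t)=c\ln t+d$, which is precisely the isotropic catenary \eqref{cate} with $\lambda=0$.

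Since every step is a direct computation, I do not expect a genuine analytic obstacle; the only points requiring care are the bookkeeping in evaluating $\mathbf{N}_{par}$ and $\mathbf{N}_{min}\cdot\mathbf{N}_{par}$ for the revolution surface, and the conceptual observation that it is the \emph{relative} area — rather than the degenerate isotropic area, which would again give a trivial functional as in the Introduction — that makes the problem non-trivial. I would also note that the theorem asks only for a necessary condition on a minimizer, so it suffices to identify the curves satisfying the Euler--Lagrange equation; no second-variation or explicit minimality verification is needed beyond exhibiting these critical profiles.
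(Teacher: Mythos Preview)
Your proof is correct and follows essentially the same route as the paper's: compute $\rmd A^*$ for the rotation surface \eqref{p-sms}, reduce $\mathcal{A}[\gamma]$ to $\pi\int_a^b t(1+\dot z^2)\,\rmd t$, observe that the Lagrangian is independent of $z$, and integrate $t\dot z=c$. The paper also records, at the end of its proof, the same identification you emphasize up front, namely that this Lagrangian is (up to a constant) the $\lambda=0$ case of $\mathcal{F}_z$ in Theorem~\ref{t1}.
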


\begin{proof} 
If $S_\gamma$  has minimum area, then its generating curve $\gamma$ is a critical point of the functional $\mathcal{A}[\gamma]$. Let us compute $\rmd A^*$. The first fundamental form and area element of $S_\gamma$ with respect to the parametrization $\mathbf{r}=\mathbf{r}(t,\theta)$ are given by
\begin{equation*}
    \mathrm{I} = \rmd t^2+t^2\,\rmd\theta^2 \quad \mbox{and} \quad \rmd A=t\,\rmd t\rmd\theta. 
\end{equation*}
The minimal and parabolic normal are 
\begin{equation*}
    \mathbf{N}_{min} = (-z'\cos\theta,-z'\sin\theta,1)
\end{equation*}
and
\begin{equation*}
    \mathbf{N}_{par} = \left(-z'\cos\theta,-z'\sin\theta,\frac{1}{2}-\frac{z'\,^2}{2}\right).
\end{equation*}
Therefore, the relative area is computed as
 $$
 \rmd A^* = (\mathbf{N}_{par}\cdot\mathbf{N}_{min})\,\rmd A = t\left(\frac{1}{2}+\frac{z'\,^2}{2}\right)\rmd t\,\rmd\theta. 
 $$
The relative surface area functional $\mathcal{A}$ then becomes
$$
\mathcal{A}[\gamma]= \pi \int_{a}^{b}t(1+z'^2)\,\rmd t .
$$
Notice that the Lagrangian  $J(t,z,z')=t(1+z'^2)$ does not depend on $z$. Thus,  
 the Euler-Lagrange equation \eqref{el} reduces to 
\begin{equation}\label{cate6}
    2tz' = \frac{\partial J}{\partial z'}=2c
\end{equation}
for some constant $c\in\mathbb{R}$. On the one hand, if $c=0$, then $z=z(t)$ is a constant function, which implies that $\gamma$ is a horizontal line and $S_\gamma$ is a horizontal plane. On the other hand, if $c\not=0$, an integration of Eq. \eqref{cate6} gives $z(t)=c\ln(t)+d$, where $c,d\in\mathbb{R}$.

Alternatively, we could have noticed that $J$ coincides, up to multiplication by a constant, with the Lagrangian associated with $\mathcal{F}_z$ in Theorem \ref{t1} under the condition $\lambda=0$.
\end{proof}

The Theorem \ref{t4} is analogous to Euler's result in the simply isotropic ambient space. Proceeding with the motivation provided by the catenoid, it is natural to ask whether there exists an isotropic catenoid connecting any two coaxial circles of $\mathbb{I}^3$. It is known that the existence of an Euclidean catenoid joining two coaxial circles depends on the distance between both circles \cite{bl,is,ni}. If the circles are sufficiently close (depending on the radii of the circles), then there exist two catenoids connecting both circles, but if the distance between the circle is large, then there exits no catenoid connecting them. The following result shows that this problem in $\mathbb{I}^3$ has a different solution.

\begin{theorem} Let $\Gamma_1$ and $\Gamma_2$ be two coaxial circles in $\mathbb{I}^3$ with respect to $L_z$ and with distinct radii. Then, there exists a unique isotropic catenoid with axis $L_z$  and connecting $\Gamma_1$ and $\Gamma_2$. On the other hand, if $\Gamma_1$ and $\Gamma_2$ have the same radii, then there exists no catenoid joining them.
\end{theorem}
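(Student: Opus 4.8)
The plan is to translate the geometric condition ``the catenoid contains $\Gamma_1$ and $\Gamma_2$'' into a linear system for the two parameters $c,d$ of the generating catenary, and then read off existence and uniqueness from a single determinant. First I would fix coordinates adapted to the common axis: since $\Gamma_1,\Gamma_2$ are coaxial with respect to $L_z$, each is a horizontal circle centered on the $z$-axis, say $\Gamma_i=\{(r_i\cos\theta,r_i\sin\theta,h_i):\theta\in[0,2\pi)\}$ with radius $r_i>0$ and height $h_i$. By Theorem \ref{t4}, an isotropic catenoid with axis $L_z$ is the surface of revolution $S_\gamma$ generated by a catenary $\gamma(t)=(t,0,c\ln t+d)$ with $c\neq0$, parametrized as in \eqref{p-sms}.

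The key observation is that the slice of $S_\gamma$ at a fixed radial coordinate $t=r$ is precisely the horizontal circle $\{(r\cos\theta,r\sin\theta,c\ln r+d):\theta\in[0,2\pi)\}$, i.e.\ a circle of radius $r$ sitting at height $c\ln r+d$ (the horizontal directions carry the Euclidean metric, so the radius is unambiguously $r$). Hence $\Gamma_i\subset S_\gamma$ holds if and only if $c\ln r_i+d=h_i$, and requiring $S_\gamma$ to contain both circles is equivalent to the linear system
\[
\left(\begin{array}{cc}\ln r_1 & 1\\ \ln r_2 & 1\end{array}\right)\left(\begin{array}{c}c\\ d\end{array}\right)=\left(\begin{array}{c}h_1\\ h_2\end{array}\right),
\]
whose coefficient matrix has determinant $\ln r_1-\ln r_2=\ln(r_1/r_2)$.

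Existence and uniqueness then follow from Cramer's rule. When the radii are distinct, $\ln(r_1/r_2)\neq0$, so the system has the unique solution
\[
c=\frac{h_1-h_2}{\ln(r_1/r_2)},\qquad d=\frac{h_2\ln r_1-h_1\ln r_2}{\ln(r_1/r_2)},
\]
which determines a single catenoid $S_\gamma$, generated over the interval $[\min(r_1,r_2),\max(r_1,r_2)]$. When instead $r_1=r_2$, the determinant vanishes; since the two circles are distinct they must lie at different heights $h_1\neq h_2$, so the system is inconsistent and no catenoid joins them.

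The computation is short, so the substance of the statement is conceptual rather than technical: in the Euclidean setting the analogous matching forces two \emph{transcendental} equations in the catenary parameters, which may admit zero, one, or two solutions, whereas the simply isotropic catenary $z=c\ln t+d$ depends \emph{linearly} on $(c,d)$, collapsing the problem to the nondegeneracy of a $2\times2$ determinant. The one point that needs care is the reduction step, namely verifying that containment of a \emph{full} circle in $S_\gamma$ is equivalent to the single height equation $c\ln r_i+d=h_i$ together with the radial match $t=r_i$; and one should flag the degenerate subcase $r_1\neq r_2$, $h_1=h_2$ of coplanar concentric circles, for which $c=0$ and $S_\gamma$ reduces to a horizontal plane rather than a genuine catenoid, consistent with the $c=0$ branch of Theorem \ref{t4}.
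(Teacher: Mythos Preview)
Your proof is correct and follows the same basic reduction as the paper: translate containment of both circles into the system $c\ln r_i+d=h_i$, $i=1,2$, and read off existence and uniqueness. Your execution is in fact cleaner than the paper's: the paper solves for $d$ from the first equation and then argues existence of $c$ via the Intermediate Value Theorem applied to the linear function $f(c)=c\ln(r_2/r_1)+z_1$, which is an unnecessary detour for what you correctly identify as a $2\times2$ linear system with determinant $\ln(r_1/r_2)$. Cramer's rule settles both existence and uniqueness in one line.

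You also correctly flag a point the paper glosses over: when $r_1\neq r_2$ but $h_1=h_2$, the unique solution of the system has $c=0$, and the connecting surface is a horizontal plane rather than a genuine catenoid (the isotropic catenary requires $c\neq0$). The paper tacitly excludes this by writing ``without loss of generality $z_1<z_2$'' without addressing the equality case, so your observation sharpens the statement.
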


\begin{proof} The assumption that the circles have different radii is necessary since the profile curve of the isotropic catenoid, namely the isotropic catenary, is monotonic as a function of the distance to $L_z$. Let   $(r_i,z_i)\in\mathbb{R}^2_{+}$ be the intersection of $\Gamma_i$ with the $xz$-plane,  $i=1,2$. By hypothesis,  $r_1\not=r_2$. Since the   isotropic catenary is $\gamma(t)=(t,0,c\ln(t)+d)$, the proof is completed if it is established the existence of $c,d\in\mathbb{R}$ such that 
\begin{equation}\label{cln}
\left\{
\begin{aligned}
c\ln(r_1)+d&=z_1\\
c\ln(r_2)+d&=z_2.
\end{aligned}
\right.
\end{equation}
Without loss of generality, we may assume that $z_1<z_2$ (A similar reasoning holds if $z_1>z_2$). In particular, $c>0$ in order to ensure that the function $t\mapsto c\ln(t)+d$ is increasing. 
From the first equation of \eqref{cln}, it is deduced $d=z_1-c\ln(r_1)$. By using the second equation of \eqref{cln},  the problem  reduces to finding $c>0$ such that  $c\ln(\tfrac{r_2}{r_1})+z_1=z_2$. Define the function $f(c)=c\ln(\tfrac{r_2}{r_1})+z_1$. Using that $r_2/r_1>1$,  we have
$$
\lim_{c\to 0^+}f(c)=z_1 \quad \mbox{and} \quad \lim_{c\to \infty}f(c)=\infty.
$$ 
Then, the Intermediate Value Theorem assures the existence of $c>0$ such that $f(c)=z_2$. On the other hand, since the function $f=f(c)$ is strictly increasing, the value $c$ such that $f(c)=z_2$ is unique.
\end{proof}


\begin{remark}
In isotropic space $\mathbb{I}^3$, there are two types of surfaces of revolution, namely, Euclidean rotational surfaces and parabolic rotational surfaces. The classification of minimal surfaces of revolution was done in \cite{daSilvaMJOU2021} obtaining the logarithmoid of revolution and the hyperbolic paraboloid, respectively. (See our Prop. \ref{prop::CMCparabRevSurf} for the characterization of minimal surfaces of parabolic revolution.) While the hyperbolic paraboloid is self-conjugate, the logarithmoid of revolution is conjugate to a helicoid \cite{daSilvaJG2021}. This property of the logarithmoid of revolution together with the fact that its generating curve is an isotropic catenary show that this surface is the simply isotropic analog of the catenoid.
\end{remark}

\section{Simply isotropic singular minimal surfaces}\label{s-surface}

In this section, we extend the notion of the catenary of $\mathbb{I}^2$  to the problem of the hanging surface in $\mathbb{I}^3$. Consider a surface $M^2$  of constant mass density which is suspended from a given closed curve $\Gamma$. Let $A_0>0$ be the relative area of $M^2$. The hanging surface problem  consists in finding the shape of $M^2$ when $M^2$ is suspended under its weight, and this weight is measured with respect to a plane of $\mathbb{I}^3$. As in the case of the catenary, the computation of the weight depends on whether  the reference plane is isotropic or non-isotropic. In both situations, the problem is equivalent to finding the surfaces which are critical points of the {weight}  among all surfaces with the same boundary curve $\Gamma$ and the same relative area $A_0$.

Similarly to Section \ref{intro}, for the hanging chain problem of $\mathbb{I}^2$, we shall calculate the {weight} of a surface in $\mathbb{I}^3$ using the relative area. Otherwise, the use of the area element would  lead to trivial conclusions. Indeed, if we were using the regular area element, then the weight of $M^2:\mathbf{r}(y,z)=(u(y,z),y,z)$ computed with respect to an isotropic plane $\Pi_{yz}=\{x=0\}$ would be 
$$\int_{M^2}d(\mathbf{r}(y,z),\Pi_{yz})\rmd A=\int_\Omega u\, u_z\, \rmd y\rmd z.$$
Finally, if  $u:\Omega\subset \Pi_{yz}\to\mathbb{R}$ is a smooth function over the open set $\Omega$ and $\Gamma$ is the boundary of $M^2$, then by the divergence theorem, the weight can be rewritten as
$$
\int_\Omega u\, u_z\, \rmd y\rmd z=\int_{\partial\Omega}  (0,\frac{u^2}{2})\cdot \nu\, \rmd s,
$$
where $\nu$ is the outer unit normal vector  along $\partial\Omega$. This integral depends only on $\Gamma$ and $\partial\Omega$ and, consequently,  it is constant  for all surfaces with the same boundary $\Gamma$. Similar arguments can be employed for the weight measured with respect to a non-isotropic plane. In conclusion, we must replace the area element by the relative area one.

In the following, we shall distinguish between isotropic and non-isotropic reference planes.  Let $M^2$ be an admissible surface of $\mathbb{I}^3$ parametrized in its normal form: $\mathbf{r}(x,y)=(x,y,u(x,y))$. From $
\mathbf{r}_x = (1,0,u_x)$ and $\mathbf{r}_y=(0,1,u_y)$, the first and second fundamental forms and the area element are
\begin{equation}
    \mathrm{I} = \rmd x^2+\rmd y^2, \quad \mathrm{II} = u_{xx}\rmd x^2+2u_{xy}\rmd x\rmd y+u_{yy}\rmd y^2, \quad \mbox{ and } \quad \rmd A = \rmd x\rmd y.
\end{equation}
The formula \eqref{mean} of the mean curvature $H$ of $M^2$ is 
\begin{equation}\label{eq::MeanCurvOfxyGraph}
     H = \frac{u_{xx}+u_{yy}}{2}.
\end{equation}
In addition, the minimal and parabolic normal vector fields are
\begin{equation}\label{min2}
    \mathbf{N}_{min} = \left(-u_x,-u_y,1\right) \quad \mbox{ and } \quad
    \mathbf{N}_{par} = \left(-u_x,-u_y,\frac{1}{2}-\frac{u_x^2+u_y^2}{2}\right).
\end{equation}
Hence, the relative area element of $M^2$ is
\begin{equation}\label{da2}
     \rmd A^* =  (\mathbf{N}_{min}\cdot\mathbf{N}_{par})\, \rmd A =  \frac{1}{2}\left(1+u_x^2+u_y^2\right)\, \rmd x\rmd y.  
\end{equation}

We begin considering the weight measured with respect to the isotropic plane $\Pi_{yz}$  of equation $x=0$.     The weight of $M^2$ is calculated measuring the distance between the points of $M^2$ and the plane $\Pi_{yz}$. This distance is $d((x,y,z),\Pi_{yz})=|x|$. From now on, and without loss of generality, it will be assumed that all surfaces are included in the half-space $\Pi_{yz}^+=\{(x,y,z)\in\mathbb{I}^3:x>0\}$.  The {weight} $W$ of $M^2$ is 
 \begin{equation}
    { W = \int_{M^2} x\, \rmd A^*, } 
 \end{equation}
where $\rmd A^*$ is the relative area element of $M^2$. Thus, the functional to minimize is \begin{equation}
    \mathcal{E}_{yz}[M^2]=\int_{M^2} x\, \rmd A^*-\lambda\int_{M^2}\, \rmd A^*=\int_{M^2}(x-\lambda)\, \rmd A^*,
\end{equation}
where $\lambda$ is a Lagrange multiplier due to the constraint on the relative area.   Let $\Gamma$ and $A_0$  be the boundary curve and the relative area of  $M^2$, respectively. Let $\mathcal{C}_{\Gamma,A_0}^{yz}(\Omega)$ be the class of all graphs $G$ over $\Omega$, $G\subset \Pi_{yz}^+$, with boundary $\Gamma$ and relative area $A_0$.   Let $X=\partial_x\in\mathfrak{X}(\mathbb{I}^3)$, i.e., $X$ is the velocity vector field of the lines used to measure the distance in the computation of the {weight}. 
 
 \begin{theorem} 
 Let $M^2\subset \Pi_{yz}^+$ be the graph of a function $u:\Omega\subset\Pi_{yz}\to\mathbb{R}$, where $\Omega$ is a bounded domain. Then,  $M^2$    is a critical point of $\mathcal{E}_{yz}$ in $\mathcal{C}_{\Gamma,A_0}^{yz}(\Omega)$ if, and only if, its mean curvature $H$ satisfies
 \begin{equation}\label{sms1}
    H(\mathbf{r}) =  \frac{\langle\mathbf{N}_{par},X\rangle}{2(x-\lambda)} =  \frac{\langle\mathbf{N}_{par},X\rangle}{2(d(\mathbf{r},\Pi_{yz})-\lambda)} .
\end{equation}
\end{theorem}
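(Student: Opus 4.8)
The plan is to derive the Euler--Lagrange equation for the functional $\mathcal{E}_{yz}$ over graphs $\mathbf{r}(x,y)=(x,y,u(x,y))$ and show that it reduces to the prescribed mean-curvature equation \eqref{sms1}. Since both the weight and the relative area are computed using the relative area element \eqref{da2}, the Lagrangian is entirely explicit. First I would substitute $\rmd A^*=\frac{1}{2}(1+u_x^2+u_y^2)\,\rmd x\rmd y$ into $\mathcal{E}_{yz}[M^2]=\int_{M^2}(x-\lambda)\,\rmd A^*$, obtaining the integral
\begin{equation*}
\mathcal{E}_{yz}[M^2] = \frac{1}{2}\int_\Omega (x-\lambda)\left(1+u_x^2+u_y^2\right)\,\rmd x\rmd y,
\end{equation*}
whose integrand $F(x,y,u,u_x,u_y)=\frac{1}{2}(x-\lambda)(1+u_x^2+u_y^2)$ depends on the gradient of $u$ but not on $u$ itself.

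Next I would write down the two-variable Euler--Lagrange equation $\frac{\partial F}{\partial u}-\partial_x\!\left(\frac{\partial F}{\partial u_x}\right)-\partial_y\!\left(\frac{\partial F}{\partial u_y}\right)=0$. Because $F$ is independent of $u$, the first term vanishes, and since $\frac{\partial F}{\partial u_x}=(x-\lambda)u_x$ and $\frac{\partial F}{\partial u_y}=(x-\lambda)u_y$, the equation becomes
\begin{equation*}
\partial_x\big[(x-\lambda)u_x\big]+\partial_y\big[(x-\lambda)u_y\big]=0,
\end{equation*}
that is, $u_x+(x-\lambda)(u_{xx}+u_{yy})=0$. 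The final step is to translate this into the stated geometric form. Using \eqref{eq::MeanCurvOfxyGraph}, the Laplacian $u_{xx}+u_{yy}$ equals $2H$; and from the expression for $\mathbf{N}_{par}$ in \eqref{min2}, together with $X=\partial_x=(1,0,0)$, the primary inner product gives $\langle\mathbf{N}_{par},X\rangle=-u_x$. Substituting these identities, the Euler--Lagrange equation $u_x=-(x-\lambda)\cdot 2H$ rearranges directly into $H=\frac{-u_x}{2(x-\lambda)}=\frac{\langle\mathbf{N}_{par},X\rangle}{2(x-\lambda)}$, and since $d(\mathbf{r},\Pi_{yz})=x$ on $\Pi_{yz}^+$, this is exactly \eqref{sms1}.

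The argument is a direct computation closely paralleling the proof of Theorem \ref{t1}, so I expect no genuine obstacle; the only point requiring a little care is the rigorous justification that critical points of $\mathcal{E}_{yz}$ \emph{within the constrained class} $\mathcal{C}^{yz}_{\Gamma,A_0}(\Omega)$ satisfy this unconstrained-looking equation. This is handled by the Lagrange-multiplier structure already built into $\mathcal{E}_{yz}$: the relative-area constraint $\int_{M^2}\rmd A^*=A_0$ is incorporated through the term $-\lambda\int_{M^2}\rmd A^*$, so variations $u\mapsto u+\varepsilon\varphi$ with $\varphi$ vanishing on $\partial\Omega=\Gamma$ yield, after integration by parts with no boundary contribution, precisely the stated equation as the vanishing of the first variation.
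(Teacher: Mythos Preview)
Your proposal is correct and follows essentially the same approach as the paper: compute the relative-area functional explicitly, observe that the Lagrangian does not depend on $u$, derive the Euler--Lagrange equation $\partial_x[(x-\lambda)u_x]+\partial_y[(x-\lambda)u_y]=0$, and identify $u_{xx}+u_{yy}=2H$ and $\langle\mathbf{N}_{par},X\rangle=-u_x$ to obtain \eqref{sms1}. Your added remark on why the unconstrained Euler--Lagrange equation is the right one for the constrained class is a welcome clarification that the paper leaves implicit.
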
 
    
\begin{proof}
Using Eq. \eqref{da2}, the functional $\mathcal{E}_{yz}$ is 
\[
    \mathcal{E}_{yz}[M^2] = \int_{M^2}(x-\lambda)\,\rmd A^* = \int_{\Omega}\frac{(x-\lambda)}{2}\left(1 +p^2+q^2\right)\rmd x\rmd y,
\]
where $p=u_x$ and $q=u_y$. To find the critical points of $\mathcal{E}_{yz}$ in $\mathcal{C}_{\Gamma,A_0}^{yz}$, we use the  Euler-Lagrange equation
\begin{equation}\label{el2}
 \frac{\partial F}{\partial u}-\frac{\partial}{\partial x}\left(\frac{\partial F}{\partial p}\right)-\frac{\partial}{\partial y}\left(\frac{\partial F}{\partial q}\right) = 0,
\end{equation}
where 
$$  F(x,y,u,p,q)=\frac{(x-\lambda)}{2}\left(1 +p^2+q^2\right).$$ 
Notice that $F$ does not depend on $u$. Thus, Eq. \eqref{el2}  leads to
$$ ((x-\lambda)p)_x+  ((x-\lambda)q)_y=0.$$
This gives
$$(x-\lambda)(p_x+q_y)+p=0.$$
Using Eq. \eqref{eq::MeanCurvOfxyGraph}, we have $2H(x-\lambda)=-p$. Finally, from Eq. \eqref{min2} and the definition of $X$, we conclude $\langle \mathbf{N}_{par},X\rangle=-p$, proving the validity of Eq. \eqref{sms1}.
\end{proof}
Note that Eq. \eqref{sms1} is the two-dimensional version of Eq. \eqref{cate2} by replacing the curvature $\kappa$ with the mean curvature $H$ of $M^2$. In \eqref{sms1}, the vector field $X=\partial_x$ is unitary and orthogonal to the reference plane $\Pi_{yz}$. 
 
Now, consider the weight measured with respect to the non-isotropic plane $\Pi_{xy}$ of equation $z=0$. The {weight} of $M^2$ is calculated measuring the distance between the points of $M^2$ with respect to $\Pi_{xy}$. 
Since the distance to the $xy$-plane vanishes if we consider the simply isotropic metric, we shall employ the secondary metric to compute this distance. Therefore, we have  $d((x,y,z),\Pi_{xy})=|z|$. Again, we will assume that $M^2$ is contained in one of the two half-spaces determined by $\Pi_{xy}$, which will be  $\Pi_{xy}^+=\{(x,y,z)\in\mathbb{I}^3:z>0\}$.  The {weight} of $M^2$ is 
\begin{equation}
 {    W = \int_{M^2} z\, \rmd A^*. } 
\end{equation}
The functional to minimize is
\begin{equation}
    \mathcal{E}_{xy}[M^2]=\int_{M^2} z\, \rmd A^*-\lambda\int_{M^2}\, \rmd A^*=\int_{M^2}(z-\lambda)\, \rmd A^*.
\end{equation}
Here, $\lambda$ is again a Lagrange multiplier due to the constraint on the  relative area. Let $\Gamma$ and $A_0$  be the boundary curve and the relative area of  $M^2$, respectively. Let $\mathcal{C}_{\Gamma,A_0}^{xy}(\Omega)$ be the class of all graphs $G$ over $\Omega$, $G\subset\Pi_{xy}^+$, with boundary $\Gamma$ and relative area $A_0$.   Let $Z=\partial_z\in\mathfrak{X}(\mathbb{I}^3)$, i.e., $Z$ is the velocity vector field of the lines used to measure distances in the computation of the {weight}.
 
 \begin{theorem}\label{t6} 
 Let  $M^2\subset\Pi_{xy}^+$ be the graph of a function $u:\Omega\subset\Pi_{xy}\to\mathbb{R}$, where $\Omega$ is a bounded domain. Then,  $M^2$   is  a critical point of $\mathcal{E}_{xy}$  in $\mathcal{C}_{\Gamma,A_0}^{xy}$ if, and only if, its mean curvature $H$ satisfies
 \begin{equation}\label{sms2}
    H(\mathbf{r}) =  \frac{\llangle\mathbf{N}_{par},Z\rrangle}{2(z-\lambda)} =  \frac{\llangle\mathbf{N}_{par},Z\rrangle}{2(d(\mathbf{r},\Pi_{xy})-\lambda)}.
\end{equation}
 \end{theorem}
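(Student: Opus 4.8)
The plan is to mirror the variational scheme used for the isotropic reference plane in the previous theorem, while flagging the one structural change that makes this case genuinely different. Since the reference plane is now $\Pi_{xy}=\{z=0\}$ and the distance is measured with the secondary metric, the weight density $z=u(x,y)$ is precisely the \emph{dependent} variable of the graph $\mathbf{r}(x,y)=(x,y,u(x,y))$, rather than one of the base coordinates. First I would insert the relative area element \eqref{da2} into the functional to obtain
\[
\mathcal{E}_{xy}[M^2]=\int_\Omega (u-\lambda)\,\frac{1}{2}\bigl(1+p^2+q^2\bigr)\,\rmd x\rmd y,\qquad p=u_x,\ q=u_y,
\]
so that the Lagrangian reads $F(x,y,u,p,q)=\tfrac12(u-\lambda)(1+p^2+q^2)$. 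The crucial observation, and the point where the computation departs from the isotropic case, is that here $F$ depends explicitly on $u$, so the term $\partial F/\partial u$ in the Euler--Lagrange equation \eqref{el2} no longer vanishes; this is exactly what will produce a numerator $1-p^2-q^2$ in place of the $-p$ obtained before.

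Next I would compute $\partial F/\partial u=\tfrac12(1+p^2+q^2)$, $\partial F/\partial p=(u-\lambda)p$ and $\partial F/\partial q=(u-\lambda)q$, and expand the two divergence terms by the product rule, e.g.\ $\partial_x[(u-\lambda)p]=p^2+(u-\lambda)u_{xx}$ and likewise in $y$. Substituting into \eqref{el2} and cancelling, the equation should collapse to
\[
\tfrac12\bigl(1-p^2-q^2\bigr)=(u-\lambda)(u_{xx}+u_{yy}).
\]

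Finally I would recast this in invariant form. Using the mean-curvature formula \eqref{eq::MeanCurvOfxyGraph}, namely $u_{xx}+u_{yy}=2H$, together with $z=u$ and the parabolic normal from \eqref{min2}, I evaluate $\llangle\mathbf{N}_{par},Z\rrangle=\tfrac12(1-p^2-q^2)$ (with $Z=\partial_z$, the secondary metric only sees the third component of $\mathbf{N}_{par}$). Dividing the displayed identity by $2(u-\lambda)$ then yields exactly \eqref{sms2}. Since every step is reversible and the Euler--Lagrange equation characterizes the critical points of this standard first-order functional among graphs with fixed boundary $\Gamma$ (the relative-area constraint being absorbed into $\lambda$), the equivalence of statements (1) and (2) follows. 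I expect no serious analytic obstacle: the difficulty is purely one of bookkeeping, and the only point requiring care is recognizing that $\partial F/\partial u\neq 0$ now, which is the source of the $1-p^2-q^2$ numerator distinguishing \eqref{sms2} from its isotropic counterpart \eqref{sms1}.
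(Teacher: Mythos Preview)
Your proposal is correct and follows essentially the same route as the paper's proof: both write the functional with Lagrangian $F=\tfrac12(u-\lambda)(1+p^2+q^2)$, compute the Euler--Lagrange equation \eqref{el2}, reduce it to $1-p^2-q^2=2(u-\lambda)(u_{xx}+u_{yy})$, and then identify the pieces via \eqref{eq::MeanCurvOfxyGraph} and \eqref{min2}. Your remark that $\partial F/\partial u\neq0$ is the structural source of the numerator $1-p^2-q^2$ is a helpful gloss, but the argument itself matches the paper's.
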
 

\begin{proof}
The functional $\mathcal{E}_{xy}$ to minimize in $\mathcal{C}_{\Gamma,A_0}^{xy}$ writes as  
\[
    \mathcal{E}_{xy}[M^2] = \int_{M^2}(z-\lambda)\,\rmd A^* = \int_{\Omega}\frac{u-\lambda}{2}\left(1+p^2+q^2\right)\rmd x\rmd y.
\]
  The corresponding Euler-Lagrange equation  is calculated with  
$$ F(x,y,u,p,q)= \frac{u-\lambda}{2}\left(1+p^2+q^2\right).$$
 Computing \eqref{el2}, we obtain 
\[
 {1+p^2+q^2} -\left(2(u-\lambda)p\right)_x-\left(2(u-\lambda)q\right)_y=0,
\]
or equivalently, 
$$1-p^2-q^2-2(u-\lambda) ( p_x+ q_y)=0.$$
This identity, together with Eq. \eqref{eq::MeanCurvOfxyGraph}, gives
$$H= \frac{1-u_x^2-u_y^2}{4(u-\lambda)}.$$
Equation \eqref{sms2} follows from the expression of $\mathbf{N}_{par}$ in Eq. \eqref{min2}: $\llangle\mathbf{N}_{par},\partial_z\rrangle=(1-p^2-q^2)/2$.
\end{proof}

Note that for the surfaces described in Theorems \ref{t6} and \ref{t5}, we may set $\lambda=0$ after a convenient translation of $\mathbb{I}^3$. We have the following definition.


\begin{definition} 
Let $\Pi\subset\mathbb{I}^3$ be a plane and $V\in\mathfrak{X}(\mathbb{I}^3)$ be a unit vector field orthogonal to $\Pi$. A surface $M^2$ in $\mathbb{I}^3$ is an isotropic singular minimal surface with respect to $\Pi$ if its mean curvature satisfies 
\[
H(p) =   \frac{\langle\mathbf{N}_{par},V\rangle}{2\, d(p,\Pi)},\quad p\in M^2,
\]
if $\Pi$ is an isotropic plane, or its mean curvature satisfies 
\[
H(p) =   \frac{\llangle\mathbf{N}_{par},V\rrangle}{2\, d(p,\Pi)},\quad p\in M^2,
\]
if $\Pi$ is a non-isotropic plane.
\end{definition}

Note that this definition extends the notion of isotropic catenary to dimension $2$. It is also possible to extend this definition if we introduce  a power $\alpha$ in the functional  $\mathcal{E}_{yz}$ and $\mathcal{E}_{xy}$, replacing $x-\lambda$ with  $x^\alpha-\lambda$  and $z-\lambda$ with $z^\alpha-\lambda$, respectively. The critical points of these functionals generalize the concept of $\alpha$-catenaries and they are characterized by  Eqs. \eqref{sms1}  and \eqref{sms2} after multiplying the right-hand sides by the factor $\alpha$. {Here, the case $\alpha=0$ corresponds to minimal surfaces.}

\section{Simply isotropic invariant singular minimal surfaces}\label{s-invariant}

In this section, we study the isotropic singular minimal surfaces that are invariant surfaces and whose generating curves lie in the $yz$-plane. We show that there exist no singular minimal helicoidal surfaces. For surfaces of revolution, either Euclidean or parabolic, we completely classify all singular minimal surfaces when the weight is measured with respect to an isotropic plane. On the other hand, when the weight is measured with respect to the a non-isotropic plane, the generating curve is associated with the solution of a non-linear ordinary differential equation of second order.

\subsection{Singular minimal surfaces of Euclidean revolution}\label{s-elliptic}

A helicoidal surface is the surface invariant under the action of the one-parameter group $\mathcal{G}_p=\{\mathcal{H}_{\theta}:\theta\in\mathbb{R}\}$ of helicoidal motions \cite{daSilvaMJOU2021}, where   
\[
\left(
 \begin{array}{c}
      x  \\
      y \\
      z \\
 \end{array}
\right)\mapsto
\mathcal{H}_{\theta} \left(
 \begin{array}{c}
      x  \\
      y \\
      z \\
 \end{array}
\right) = \left(
                \begin{array}{ccc}
                      \cos\theta   &   -\sin\theta  & 0  \\
                      \sin\theta  &   \cos\theta  & 0  \\
                      0            & 0             & 1  \\
                \end{array} 
                \right)\left(
 \begin{array}{c}
      x  \\
      y \\
      z \\
 \end{array}
\right)+\left(
 \begin{array}{c}
      0       \\
      0       \\
      c\,\theta \\
 \end{array}
\right).
\]
Applying helicoidal motions  to $\gamma(t)=(t,0,z(t))$ gives the invariant surface $S_{\gamma}=\mathcal{H}_\theta(\gamma)$ parametrized as
\begin{equation}
    \mathbf{r}(t,\theta) = (t\cos\theta,t\sin\theta,c\,\theta+z(t)).
\end{equation}
For $c=0$, $\mathcal{H}_{\theta}=\mathcal{R}_{\theta}$ and we then obtain surfaces of (Euclidean) revolution as in Eq. \eqref{p-sms}.

The mean curvature $H$ of $S_\gamma$ is  
\begin{equation}\label{h3}
H=\frac{z'+tz''}{2t}.
\end{equation}
The parabolic normal vector field of $S_\gamma$ is
\begin{equation}\label{h4}
\mathbf{N}_{par}=\left(\frac{c\sin\theta}{t}-z'\cos\theta,-\frac{c\cos\theta}{t}-z'\sin\theta,\frac12(1-\frac{c^2}{t^2}-z'^2)\right).
\end{equation}

\begin{theorem} \label{t5}
If $S_{\gamma}$ is a helicoidal singular minimal surface, then $S_{\gamma}$ must be a surface of Euclidean  revolution, i.e., $c=0$. In addition,
\begin{enumerate}
    \item If the reference plane is $\Pi_{yz}$ (isotropic), then $z(t)=\dfrac{z_2}{t}+z_1$, $z_1,z_2\in\mathbb{R}$. In particular, this includes horizontal planes.
    \item If the reference plane is $\Pi_{xz}$ (non-isotropic), then $z=z(t)>0$ satisfies
    \begin{equation}\label{e-sms}
     z''(t)+\frac{z'(t)}{t}=\frac{1-z'(t)^2}{2z(t)}. 
    \end{equation}
\end{enumerate} 
\end{theorem}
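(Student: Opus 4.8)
The plan is to verify the two characterizing conditions for the singular minimal surface (Eqs. \eqref{sms1} and \eqref{sms2}) directly against the helicoidal data provided in Eqs. \eqref{h3} and \eqref{h4}, and to show the helicoidal parameter $c$ is forced to vanish. First I would record the relevant inner products. For the isotropic reference plane $\Pi_{yz}$, the distance is $d(\mathbf{r},\Pi_{yz})=|x|=t|\cos\theta|$ (up to the sign convention on the half-space), and from \eqref{h4} the quantity $\langle\mathbf{N}_{par},X\rangle$ equals the first (horizontal) component $\frac{c\sin\theta}{t}-z'\cos\theta$. For the non-isotropic plane $\Pi_{xz}$ one uses the secondary metric, so $d(\mathbf{r},\Pi_{xz})=|y|=t|\sin\theta|$ and $\llangle\mathbf{N}_{par},\partial_y\rrangle$ picks out the vertical (third) component $\tfrac12(1-\tfrac{c^2}{t^2}-z'^2)$. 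Substituting these, together with $H$ from \eqref{h3}, into the defining equations yields identities that must hold for \emph{all} $\theta$.

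The crux of the argument is the $\theta$-dependence. For an isotropic singular minimal surface with respect to $\Pi_{yz}$, the defining relation reads $H=\frac{\langle\mathbf{N}_{par},X\rangle}{2\,d(\mathbf{r},\Pi_{yz})}$ (with $\lambda=0$ after translation), i.e.
\begin{equation*}
\frac{z'+tz''}{2t}=\frac{\frac{c\sin\theta}{t}-z'\cos\theta}{2\,t\cos\theta}.
\end{equation*}
The left-hand side is independent of $\theta$, whereas the right-hand side contains the term $\frac{c\sin\theta}{2t^2\cos\theta}=\frac{c}{2t^2}\tan\theta$, which is genuinely $\theta$-dependent unless $c=0$. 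Since $t$ is fixed while $\theta$ varies along the surface, this forces $c=0$; an entirely analogous computation with $\Pi_{xz}$ and the secondary metric produces a $\tan\theta$ term (from the horizontal component of $\mathbf{N}_{par}$ interacting with the $\theta$-averaging, or more directly from requiring consistency across the two coordinate curves) that again kills $c$. I expect this decoupling step to be the main obstacle, in the sense that one must argue carefully that the equation is imposed pointwise on the surface and hence must hold identically in $\theta$; once that is granted, the vanishing of $c$ is forced algebraically.

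With $c=0$ established, $S_\gamma$ is a surface of Euclidean revolution and both $H$ and the normal components lose their $\theta$-dependence. For case (1), $\langle\mathbf{N}_{par},X\rangle=-z'\cos\theta$ and $d=t\cos\theta$, so the defining equation collapses to
\begin{equation*}
\frac{z'+tz''}{2t}=\frac{-z'\cos\theta}{2\,t\cos\theta}=-\frac{z'}{2t},
\end{equation*}
giving $tz''+2z'=0$, equivalently $(t^2 z')'=t\,(tz''+2z')=0$. Integrating yields $t^2 z'=\text{const}$, hence $z'(t)=-z_2/t^2$ and $z(t)=z_2/t+z_1$ with $z_1,z_2\in\mathbb{R}$ (the constant solution $z_2=0$ recovers horizontal planes). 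For case (2), using the third component of \eqref{h4} with $c=0$ and $d(\mathbf{r},\Pi_{xz})=z$ (secondary metric), the relation $H=\frac{\llangle\mathbf{N}_{par},\partial_y\rrangle}{2z}$ becomes
\begin{equation*}
\frac{z'+tz''}{2t}=\frac{\frac12(1-z'^2)}{2z},
\end{equation*}
which rearranges at once to $z''+\frac{z'}{t}=\frac{1-z'^2}{2z}$, the claimed Eq. \eqref{e-sms}. The remaining steps after the $c=0$ dichotomy are routine integrations, so the whole proof reduces to the exclusion of genuine helicoidal motion plus two short ODE solves.
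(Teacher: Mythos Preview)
Your treatment of case~(1) is correct and essentially identical to the paper's: both substitute the helicoidal data into \eqref{sms1}, observe that the resulting identity must hold for all $\theta$ and hence forces $c=0$, and then solve $tz''+2z'=0$.

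In case~(2), however, there is a genuine gap in your exclusion of $c$, compounded by confusion in the setup. First, the statement's label $\Pi_{xz}$ is a typo for $\Pi_{xy}$ (the plane $z=0$, the only non-isotropic coordinate plane); the relevant vector field is $Z=\partial_z$, not $\partial_y$, and the distance in the secondary metric is the $z$-coordinate of the surface, namely $d(\mathbf{r},\Pi_{xy})=z(t)+c\theta$. Your claim that $\llangle\mathbf{N}_{par},\partial_y\rrangle$ equals the third component of $\mathbf{N}_{par}$ is simply false, since $\llangle\cdot,\cdot\rrangle$ pairs only third components and $\partial_y$ has third component zero; and your initial distance formula $d=|y|=t|\sin\theta|$ contradicts the $d=z$ you use later. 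Second, and more seriously, your heuristic that ``an entirely analogous computation produces a $\tan\theta$ term'' fails here. The third component of $\mathbf{N}_{par}$ in \eqref{h4} is $\theta$-independent; the only $\theta$-dependence enters \emph{linearly} through $d=z+c\theta$. Clearing denominators, the condition \eqref{sms2} reads
\[
c(z'+tz'')\,\theta \;+\; z(z'+tz'')-\tfrac{t}{2}\Big(1-\tfrac{c^2}{t^2}-z'^2\Big)\;=\;0,
\]
and linear independence of $\{1,\theta\}$ yields only $c(z'+tz'')=0$, which does \emph{not} by itself force $c=0$. The paper must separately rule out the branch $c\neq 0$, $z'+tz''=0$: integrating gives either $z'\equiv 0$ or $z'=m/t$, and substituting into the remaining equation produces $t^2=c^2$ or $(1-m^2)t^2=c^2$, both contradictions. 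Only after this case analysis can one set $c=0$ and obtain \eqref{e-sms}. Your final ODE derivation once $c=0$ is correct.
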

\begin{proof}
We distinguish between two cases, depending on the type of reference plane.
\begin{enumerate}
    \item The reference plane is $\Pi_{yz}$. Then, $\langle \mathbf{N}_{par},X\rangle=c\sin\theta/t-z'\cos\theta$. Since the distance to $\Pi_{yz}$ in Eq. \eqref{sms1} is the $x$-coordinate, we have  $d(p,\Pi_{yz})=t\cos\theta$ and the Eq. \eqref{sms1} for the mean curvature of a helicoidal singular minimal surface reduces to
    \[
    \frac{z'+tz''}{t}=\frac{\frac{c}{t}\sin\theta-z'\cos\theta}{t\cos\theta}, 
    \] 
    or equivalently, 
    \begin{equation}\label{ss}
    t(2z'+tz'')\cos\theta-c\sin\theta=0.
    \end{equation}
    Since  $\cos\theta$ and $\sin\theta$ are  linearly independent  functions, we deduce from \eqref{ss} that $c=0$ and, consequently, $S_\gamma$ is a surface of revolution. Moreover, $z=z(t)$ must satisfy $2z'+tz''=0$. The solution of this equation is   $z(t)=z_2t^{-1}+z_1$, where $z_1,z_2\in\mathbb{R}$.
    \item The reference plane is $\Pi_{xy}$. For the computation of Eq. \eqref{sms2}, we have $\llangle \mathbf{N}_{par},Z\rrangle=\frac{1}{2}(1-c^2/t^2-z'\,^2)$ and $d(p,\Pi_{xy})=z+c\,\theta$. Using Eq. \eqref{h3}, it follows that Eq.  \eqref{sms2} is
    $$
    \frac{z'+t z''}{2t}=\frac{\frac{1}{2}(1-\frac{c^2}{t^2}-z'^2)}{2(z+c\theta)},
    $$
    or equivalently, 
    \[
    c(z'+tz'')\theta+z(z'+tz'')-\frac{t}{2}(1-\frac{c^2}{t^2}-z'^2)=0.
    \]
    
    Since $\{1,\theta\}$ forms a set of linearly independent functions, we deduce that
    \begin{equation}\label{ss2}
    c(z'+tz'')=0 \quad \mbox{and} \quad z(z'+tz'')-\frac{t}{2}(1-\frac{c^2}{t^2}-z'^2)=0.
    \end{equation}
    From the first equation of \eqref{ss2}, we distinguish between two cases according to whether $c$ is $0$ or not. If $c\not=0$, then $z'+tz''=0$. If, in addition, $z'=0$, then the second equation of \eqref{ss2} is simply $\frac{t}{2}(1-\frac{c^2}{t^2})=0$, which leads to a contradiction, namely, $t$ cannot be constant. On the other hand, if $c\not=0$ but $z'\not=0$, then  the equation  $z'+tz''=0$ yields $z'=m/t$ for some constant $m>0$. Using this, the second equation of \eqref{ss2} is $\frac{1-m^2}{2}t-\frac{c^2}{2t}=0$, which also leads to a contradiction. Therefore, we must have $c=0$ and, in particular, $S_\gamma$ is a surface of revolution.  Coming back to \eqref{ss2}, we conclude that $z(z'+tz'')-\frac{t}{2}(1-\frac{c^2}{t^2}-z'^2)=0$, which is nothing but  Eq. \eqref{e-sms}.
\end{enumerate}
\end{proof}

The solution $\gamma$ in item 1 of Theorem \ref{t5} coincides with the $2$-catenary with respect to $L_z$ that appeared in Eq. \eqref{acate1}. This observation is a particular version  of a more general result. 

\begin{corollary} Let $S_\gamma$ be a surface of revolution in $\mathbb{I}^3$ with respect to $L_z$. Then, $S_\gamma$ is an isotropic $\alpha$-singular minimal surface with respect to $\Pi_{yz}$ if, and only if, $\gamma$ is an isotropic $(\alpha+1)$-catenary with respect to $L_z$.
\end{corollary}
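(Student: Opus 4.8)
The plan is to reduce each of the two conditions to a single second-order ordinary differential equation for the profile function $z(t)$ and then observe that the two equations are literally identical. The equivalence in the corollary will then be immediate.

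First I would treat the surface side. For a surface of revolution about $L_z$ I use the parametrization \eqref{p-sms} and borrow the quantities already computed in the proof of Theorem~\ref{t5} by setting $c=0$: the mean curvature is $H=(z'+tz'')/(2t)$ from \eqref{h3}, the first component of the parabolic normal gives $\langle\mathbf{N}_{par},X\rangle=-z'\cos\theta$ from \eqref{h4}, and the distance to the isotropic reference plane is $d(\mathbf{r},\Pi_{yz})=x=t\cos\theta$. The defining relation of an isotropic $\alpha$-singular minimal surface with respect to $\Pi_{yz}$ is \eqref{sms1} with $\lambda=0$ and the right-hand side multiplied by $\alpha$. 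Substituting and cancelling the common factor $\cos\theta$ (legitimate on the portion of $S_\gamma$ lying in $\Pi_{yz}^+$, where $\cos\theta>0$, so that no sign ambiguity from the distance survives), the $\theta$-dependence drops out and the condition collapses to
\[
\frac{z'+tz''}{2t}=\alpha\,\frac{-z'}{2t},
\]
that is, $tz''+(\alpha+1)z'=0$.

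Next I would treat the catenary side. Writing $\gamma(t)=(t,z(t))$ as a curve in $\mathbb{I}^2$, I have $\kappa=z''$ from \eqref{curvature}, $\langle\mathbf{N}_{par},X\rangle=-z'$ from \eqref{par}, and $\langle\gamma,X\rangle=t$. By definition (the $\lambda=0$ specialization of \eqref{cate4} with index $\alpha+1$), $\gamma$ is an isotropic $(\alpha+1)$-catenary with respect to $L_z$ precisely when $\kappa=(\alpha+1)\langle\mathbf{N}_{par},X\rangle/\langle\gamma,X\rangle$, which upon substitution reads
\[
z''=(\alpha+1)\,\frac{-z'}{t},
\]
that is, $tz''+(\alpha+1)z'=0$. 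Since the two displayed ODEs coincide, the equivalence follows, and in either case the solutions are exactly those recorded in \eqref{acate1} (with $\lambda=0$), which closes the argument.

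I do not expect a serious obstacle; the computation is routine once the ingredients from the proof of Theorem~\ref{t5} and the plane-curve formulas are in hand. The only point requiring care—and the conceptually interesting one—is the shift of the index from $\alpha$ to $\alpha+1$. It arises because the mean curvature $H=(z'+tz'')/(2t)$ of the surface of revolution carries an extra first-order term $z'/(2t)$ that is absent from the plane curvature $\kappa=z''$; moving this term to the right-hand side converts the factor $\alpha$ into $\alpha+1$. A secondary bookkeeping point is to verify that the distance $d(\mathbf{r},\Pi_{yz})$ and the component $\langle\mathbf{N}_{par},X\rangle$ carry the \emph{same} factor $\cos\theta$, so that it cancels cleanly; this common factor is precisely what makes a rotational profile compatible with the singular minimal condition relative to an isotropic plane.
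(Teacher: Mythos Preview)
Your proposal is correct and follows essentially the same approach as the paper: both compute the $\alpha$-singular minimal condition for $S_\gamma$ using \eqref{h3}, \eqref{h4}, and the distance $t\cos\theta$, cancel the $\cos\theta$ factor to obtain $(\alpha+1)z'+tz''=0$, and then observe this coincides with the $(\alpha+1)$-catenary equation \eqref{cate42}. The paper's proof is slightly terser, but the logic and the key observation about the index shift are identical.
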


\begin{proof} 
Suppose that $\gamma(t)=(t,0,z(t))$. Using the parametrization in Eq. \eqref{p-sms}, it follows from Eqs. \eqref{h3} and \eqref{h4} that Eq.  \eqref{sms1} implies
$$\frac{z'+tz''}{2t}=-\alpha\frac{z'\cos\theta}{2t\cos\theta}=-\alpha\frac{z'}{2t}.$$
Equivalently, we have $(\alpha+1)z'+tz''=0$. But this equation coincides with Eq. \eqref{cate42} for $\alpha+1$.
\end{proof}

We conclude this section by studying singular minimal surfaces of revolution with respect to non-isotropic planes and that intersect the rotation axis.  The generating curve $\gamma=\gamma(t)$ is defined over an interval $I=(t_0,t_1)\subset(0,\infty)$ and, therefore, if $t_0=0$, then $\gamma$ meets the rotation axis $L_z$. 
We are interested in the configuration where the intersection is orthogonal, which implies that the surface $S_\gamma$ that generates $\gamma$ is smooth at $S_\gamma\cap L_z$. 

The height function $z=z(t)$ of $\gamma(t)=(t,0,z(t))$  satisfies Eq. \eqref{e-sms}. For $a,b\in\mathbb{R}$, $a>0$, consider the following Initial Value Problem (IVP)
\begin{equation}\label{e2-sms}
\left\{
\begin{aligned}
&z''(t)+\frac{z'(t)}{t}=\frac{1-z'(t)^2}{2z(t)},\\
&z(t_0)=a,\ z'(t_0)=b.
\end{aligned}\right.
    \end{equation}
For any $t_0>0$, the existence and uniqueness of the IVP \eqref{e2-sms} is assured by the standard theory of Ordinary Differential Equations. However, if $t_0 =0$  the equation \eqref{e2-sms} is degenerate at $t=0$ and the existence of a solution may be lost. To establish the existence of a surface of revolution intersecting orthogonally the axis $L_z$, it is necessary to solve the IVP \eqref{e2-sms} at $t=0$ under the condition $z'(0)=0$.  

\begin{theorem} For any $a>0$, the initial value problem \eqref{e2-sms} with initial conditions $z(0)=a$ and $z'(0)=0$ has a solution $z\in C^2([0,R])$ for some $R>0$. In addition, the solution depends continuously on the parameter $a$.
\end{theorem}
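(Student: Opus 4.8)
The plan is to recast the singular initial value problem \eqref{e2-sms} as a fixed-point equation for an integral operator and to apply the Banach contraction principle on a small interval $[0,R]$. The crucial observation is that the singular term $z'/t$ becomes harmless once the equation is written in divergence form: multiplying \eqref{e-sms} by $t$ gives
\begin{equation*}
(t\,z'(t))' = t\,\frac{1-z'(t)^2}{2z(t)}.
\end{equation*}
Since $z'(0)=0$ forces $\lim_{t\to0^+} t\,z'(t)=0$, integrating from $0$ to $t$ and dividing by $t$ yields
\begin{equation*}
z'(t) = \frac{1}{t}\int_0^t s\,\frac{1-z'(s)^2}{2z(s)}\,\rmd s,
\end{equation*}
and a further integration produces the representation $z=Tz$, where
\begin{equation*}
(Tv)(t) = a + \int_0^t \frac{1}{\tau}\int_0^\tau s\,\frac{1-v'(s)^2}{2v(s)}\,\rmd s\,\rmd\tau.
\end{equation*}
Here the factor $s$ inside the inner integral compensates exactly for the $1/\tau$ outside, which is the mechanism that tames the singularity at the origin.

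First I would fix a compact parameter interval $a\in[a_0,a_1]\subset(0,\infty)$ and introduce, for $R>0$ to be chosen, the closed ball
\begin{equation*}
B_R = \Big\{v\in C^1([0,R]): v(0)=a,\ v'(0)=0,\ \|v-a\|_\infty\le \tfrac{a}{2},\ \|v'\|_\infty\le \tfrac12\Big\},
\end{equation*}
equipped with the $C^1$-norm. On $B_R$ one has $v\ge a/2>0$ and $1-v'^2\ge 3/4$, so the integrand $g_v(s)=(1-v'(s)^2)/(2v(s))$ is bounded, say $|g_v|\le M$, and Lipschitz in the pair $(v,v')$ with a constant $L$; both $M$ and $L$ can be chosen uniformly for $a\in[a_0,a_1]$. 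The elementary estimate $|(Tv)'(t)|\le \frac1t\int_0^t sM\,\rmd s=\tfrac{Mt}{2}$ shows simultaneously that $(Tv)'(0)=0$, that $\|(Tv)'\|_\infty\le MR/2$, and that $\|Tv-a\|_\infty\le MR^2/4$; hence $T$ maps $B_R$ into itself once $R$ is small. The same computation applied to the difference $g_{v_1}-g_{v_2}$ gives $\|Tv_1-Tv_2\|_{C^1}\le \tfrac{LR}{2}\,\|v_1-v_2\|_{C^1}$, so for $R$ small enough $T$ is a contraction. The Banach fixed-point theorem then provides a unique $z\in B_R$ with $z=Tz$, which by construction solves \eqref{e2-sms}. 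Differentiating the representation twice shows $(Tz)''(t)=g_z(t)-\tfrac1t (Tz)'(t)$, whose limit as $t\to0^+$ is the finite value $g_z(0)/2$; thus $z\in C^2([0,R])$ and the equation holds up to and including $t=0$.

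For the continuous dependence on $a$, I would invoke the uniform contraction principle. Choosing $R$ uniformly over $a\in[a_0,a_1]$ as above, the operator $T_a$ is a contraction with a common contraction constant, while for each fixed $v$ the map $a\mapsto T_a v$ is continuous, the dependence on $a$ entering only through the additive constant and the coefficient $1/v$. Standard parameter-dependence of fixed points then yields that $a\mapsto z_a$ is continuous in the $C^1$-, hence $C^0$-, topology.

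The main obstacle, and the reason the result is not an immediate consequence of the standard ODE theory, is precisely the degeneracy of \eqref{e2-sms} at $t=0$: the coefficient $1/t$ is singular there, so Picard--Lindel\"of does not apply at the axis and $C^2$-regularity up to $t=0$ is not automatic. The entire argument hinges on the divergence-form rewriting and the double-integral operator, whose built-in factor $s$ cancels the singular $1/\tau$; verifying that the resulting fixed point is genuinely $C^2$ at the origin (with $z''(0)=g_z(0)/2$) is the delicate point, whereas the self-map and contraction estimates themselves are routine once the right space $B_R$ is fixed.
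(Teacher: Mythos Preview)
Your proposal is correct and follows essentially the same route as the paper's proof: rewrite the equation in divergence form $(tz')'=t(1-z'^{2})/(2z)$, set up the double-integral operator $T$ on a closed ball in $C^{1}([0,R])$, verify the self-map and contraction properties, and then recover $C^{2}$-regularity at the origin (with $z''(0)=1/(4a)$) and continuous dependence via the stability of fixed points. The only cosmetic differences are your choice of ball radii and your appeal to the uniform contraction principle for the parameter dependence, both of which are equivalent to what the paper does.
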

\begin{proof} Multiplying Eq. \eqref{e-sms} by  $t$, the equation becomes $(tz')'=t(1-z'^2)/(2z)$. Define the operator
$$
(\mathsf{T}z)(t)=a+\int_0^t \frac{1}{r}\left(\int_0^r\frac{\tau(1-z'(\tau)^2)}{2z(\tau)}\, \rmd \tau\right)\, \rmd r, \quad z(t) \in C^1([0,R]).
$$
It is immediate that   $z$ is a solution of the problem \eqref{e2-sms} with $z'(0)=0$ if $u$ is a fixed point of the operator $\mathsf{T}$. Let  $C^1([0,R])$ be considered as a Banach space  endowed with the usual norm $\|z\|=\|z\|_\infty+\|z'\|_\infty$. It will be proved  the existence of $R>0$ such that $\mathsf{T}$ is a contraction in  some closed ball $\overline{\mathcal{B}(a,\epsilon)}$.    First, we prove that $\mathsf{T}$ is a self-map in a closed ball $\overline{\mathcal{B}(a,\epsilon)}$ for some $\epsilon>0$ and next, that $\mathsf{T}$ is a contraction.
\begin{enumerate}
\item Claim: there exists $\epsilon>0$ such that $\mathsf{T}(\overline{\mathcal{B}(a,\epsilon)})\subset \overline{\mathcal{B}(a,\epsilon)}$. {Indeed,} let $\epsilon>0$ such that $\epsilon<a$, which is fixed. Consider $R>0$ such that 
$$ R\leq \min\left\{\sqrt{ \frac{4\epsilon(a-\epsilon)}{1+\epsilon^2}}, \frac{2\epsilon(a-\epsilon)}{1+\epsilon^2}\right\}.$$
If $z\in \overline{\mathcal{B}(a,\epsilon)}$, we have $|z(t)-a|\leq\epsilon$ and $|z'(t)|\leq\epsilon$ for $t\in [0,R]$. Then
\begin{equation*}
\begin{aligned}
|(\mathsf{T}z)(t)-a|&\leq \int_0^t \frac{1}{r}\left\vert\int_0^r\frac{\tau(1-z'(\tau)^2)}{2z(\tau)}\, \rmd \tau\right\vert\, \rmd r\leq \int_0^t\frac{1}{r}\int_0^r\frac{\tau(1+\epsilon^2)}{2(a-\epsilon)}\, \rmd \tau \,\rmd r\\
&=\frac{t^2(1+\epsilon^2)}{8(a-\epsilon)}\leq \frac{R^2(1+\epsilon^2)}{8(a-\epsilon)}\leq\frac{\epsilon}{2}. 
\end{aligned}
\end{equation*}
On the other hand, 
\begin{equation*}
\begin{aligned}
|(\mathsf{T}z-a)'(t)|&\leq  \frac{1}{t}\int_0^t\frac{\tau(1+\epsilon^2)}{2(a-\epsilon)}\, \rmd \tau=\frac{t(1+\epsilon^2)}{4(a-\epsilon)}\leq \frac{R(1+\epsilon^2)}{4(a-\epsilon)}\leq\frac{\epsilon}{2}. 
\end{aligned}
\end{equation*}
This proves that $\| \mathsf{T}z-a\|\leq\epsilon$. Hence, $\mathsf{T}z\in\overline{\mathcal{B}(a,\epsilon)}$. As a consequence of the claim, the operator $\mathsf{T}$ is a self-map $\mathsf{T}\,\colon\, \overline{\mathcal{B}(a,\epsilon)}\to \overline{\mathcal{B}(a,\epsilon)} $.

\item Claim: the operator $\mathsf{T}:\overline{\mathcal{B}(a,\epsilon)}\to \overline{\mathcal{B}(a,\epsilon)}$ is a contraction. {Indeed,} let $L_1$  and $L_2$ respectively denote the Lipschitz constants of the functions $x\mapsto 1/(2x)$ and $x\mapsto 1-x^2$ in $[a-\epsilon,a+\epsilon]$, provided that $\epsilon<a$. Let $L=L_1L_2$. For all $z_1,z_2\in C^1([0,R])$, we have  
$$
\|\mathsf{T}z_1-\mathsf{T}z_2\|=\|\mathsf{T}z_1-\mathsf{T}z_2\|_\infty+\|(\mathsf{T}z_1)'-(\mathsf{T}z_2)'\|_\infty.
$$
We  study the term   $\|\mathsf{T}z_1-\mathsf{T}z_2\|_\infty$. Let $z_1,z_2$ be two functions in the ball $\overline{\mathcal{B}(a,\epsilon)}$ of $ (C^1([0,R]),\|\cdot\|)$. For all $t\in [0,R]$, where $R$ will be determined later, we have  
\begin{equation}\label{e1}
|(\mathsf{T}z_1)(t)-(\mathsf{T}z_2)(t)|\leq \int_0^t\frac{1}{r}\left(\int_0^rL\|z_1-z_2\|_\infty \tau\, \rmd \tau\right)\rmd r= \frac{Lt^2}{4}\|z_1-z_2\|_\infty\leq \frac{Lt^2}{4}\|z_1-z_2\|.
\end{equation}
Similarly, for $\|(\mathsf{T}z_1)'-(\mathsf{T}z_2)'\|_\infty$, we have
\begin{equation}\label{e2}
|(\mathsf{T}z_1)'(t)-(\mathsf{T}z_2)'(t)|\leq \frac{1}{t}\int_0^t L\|z_1-z_2\|_\infty \tau\, \rmd \tau= \frac{Lt}{2}\|z_1-z_2\|_\infty\leq \frac{Lt}{2}\|z_1-z_2\|. 
\end{equation}
Let us choose $R$   such that $R\leq\{\sqrt{2/L},1/L\}$. Then, the inequalities  \eqref{e1} and \eqref{e2} imply $\|\mathsf{T}z_1-\mathsf{T}z_2\|< \|z_1- z_2\|$, 
proving that   the operator $\mathsf{T}$ is a contraction in   $C^1([0,R])$. 
\end{enumerate}
Once the two claims have been proved, the Fixed Point Theorem asserts  the existence of  a fixed point $z\in C^1([0,R])\cap C^2((0,R])$.  This function $z$ is then a solution of \eqref{e2-sms} with $z'(0)=0$. Finally, we prove that the solution $u$ extends with $C^2$-regularity at $t=0$. By taking limits in \eqref{e2-sms} as $t\to 0$,  and by L'H\^{o}pital rule  on the quotient $z'(t)/t$, we conclude  
$$\frac{1}{2a}=\lim_{t\to 0}\frac{1-z'(t)^2}{2z(t)}=\lim_{t\to 0}z''(0)+\lim_{t\to 0}\frac{z'(t)}{t}=\lim_{t\to 0}z''(0)+\lim_{t\to 0}z''(t)=2\lim_{t\to 0}z''(0).$$
This proves that $z''(0)=1/(4a)$. The continuous dependence of local solutions on $a$ a is a consequence of the continuous dependence of the fixed points of $\mathsf{T}$ on the parameter $a$.
\end{proof}

\subsection{Singular minimal surfaces of parabolic revolution}\label{s-parabolic}

A surface of parabolic revolution is invariant under the action of the one-parameter group $\mathcal{G}_p=\{\mathcal{P}_{\theta}:\theta\in\mathbb{R}\}$ of parabolic revolutions \cite{daSilvaMJOU2021}, where  

\[
\left(
 \begin{array}{c}
      x  \\
      y \\
      z \\
 \end{array}
\right)\mapsto
\mathcal{P}_{\theta} \left(
 \begin{array}{c}
      x  \\
      y \\
      z \\
 \end{array}
\right) = \left(
                \begin{array}{ccc}
                      1  &   0  & 0  \\
                      0  &   1  & 0  \\
                    c_1\theta & c_2\theta & 1  \\
                \end{array} 
                \right)\left(
 \begin{array}{c}
      x  \\
      y \\
      z \\
 \end{array}
\right)+\left(
 \begin{array}{c}
      a\theta  \\
      b\theta \\
      c\,\theta+\frac{ac_1+bc_2}{2}\theta^2 \\
 \end{array}
\right).
\]
Applying parabolic revolutions to $\gamma(t)=(t,0,z(t))$ gives the invariant surface $S_{\gamma}=\mathcal{P}_\theta(\gamma)$ parametrized as
\begin{equation}\label{eq::ParametrizParabRevSurf}
    \mathbf{r}(t,\theta) = \left(a\theta+t,b\theta,c\theta+\frac{ac_1+bc_2}{2}\theta^2+c_1t\theta+z(t)\right).
\end{equation}
The parabolic surface of revolution $S_{\gamma}$ is regular provided that $b\not=0$. If $ac_1+bc_2=0$, then $S_{\gamma}$ is called a surface of warped translation (see Figure \ref{fig:SingMinSurf}).

The mean curvature $H$ of $S_\gamma$ is  
\begin{equation}\label{HParabRevSurf}
H= \frac{a^2+b^2}{2b^2}z''+\frac{bc_2-ac_1}{2b^2}.
\end{equation}
The parabolic normal vector field of $S_\gamma$ is
\begin{equation}\label{MparParabRevSurf}
\mathbf{N}_{par} = \left(-c_1\theta-z',\frac{az'-bc_2\theta-c-c_1t}{b}, \frac{1}{2}-\frac{F(t)}{2}\right),
\end{equation}
where
\begin{equation}\label{ff}
    F(t) = \frac{(c+c_1t)^2}{b^2}-\frac{2a(c+c_1t)}{b^2}z'+\frac{a^2+b^2}{b^2}z'^2-\frac{2t}{b}[(ac_2-bc_1)z'-c_2(c+c_1t)]+t^2(c_1^2+c_2^2).
\end{equation}

\begin{figure}
    \centering
    \includegraphics[width=1.0\linewidth]{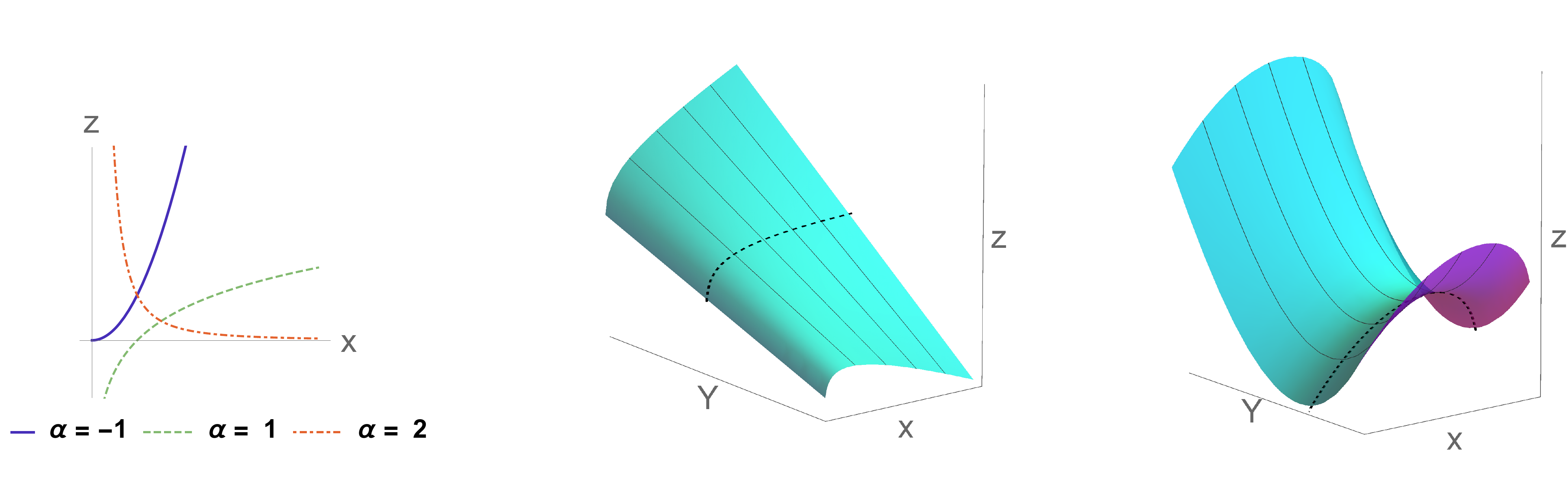}
    \caption{$\alpha$-catenaries and singular minimal surfaces. (Left) The $\alpha$-catenaries, Eq. \eqref{acate1}, with $\alpha=-1$, $\alpha=1$, and $\alpha=2$. (Center)  Singular minimal surface obtained as a warped translation surface, Eq. \eqref{eq::ParametrizParabRevSurf}, and whose generating curve (dashed black line) is a catenary $z(t)=z_2\ln t+z_1$ (See Theorem \ref{thr::SingMinSurfParabRev}). Note the surface is ruled. (Right) Singular minimal surface obtained as a surface of parabolic revolution, Eq. \eqref{eq::ParametrizParabRevSurf}, and whose generating curve (dashed black line) is a combination of a $(-1)$-catenary and a catenary: $z(t)=-\frac{c_2}{4b}t^2+z_2\ln t+z_1$ (See Theorem \ref{thr::SingMinSurfParabRev}). Here, the surface is foliated by isotropic circles. (In the figures, the parameters take the values: (Center) $c=1$ and $d=0$; (Center) $a=0,b=1,c=0,c_1=0$, and $c_2=0$; and (Right) $a=0,b=1,c=\frac{1}{2},c_1=0$, and $c_2=1.0$.) Figures generated with Mathematica.}
    \label{fig:SingMinSurf}
\end{figure}

\begin{theorem} \label{thr::SingMinSurfParabRev}
Let $S_{\gamma}$ be a singular minimal surface of parabolic revolution. (See Figure \ref{fig:SingMinSurf}.)
\begin{enumerate}
    \item If the reference plane is $\Pi_{yz}$ (isotropic), then we have two cases:
    \begin{enumerate}
        \item if $a=0$, then {$c_1=0$ and}
    \begin{equation}
         z(t) = -\frac{c_2}{4b}\,t^2+z_2\ln(t)+z_1,\quad z_1,z_2\in\mathbb{R}
    \end{equation}
\item if $a\not=0$, then $ac_2+2bc_1 = 0$ and 
    \begin{equation}
         z(t) = \frac{c_1}{2a}\,t^2+z_1, \quad z_1\in\mathbb{R}.
    \end{equation}
    \end{enumerate}
    \item If the reference plane is $\Pi_{xz}$ (non-isotropic), then   $c=c_1=0$ and  $z(t)$ is a solution of  
    \begin{equation}\label{zz}
        (2z+bc_2t^2)z''+z'^2-\frac{2abc_2}{a^2+b^2}\,tz'+\frac{2bc_2}{a^2+b^2}(z+bc_2t^2)-\frac{b^2}{a^2+b^2}=0.
    \end{equation}
\end{enumerate} 
\end{theorem}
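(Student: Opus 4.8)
The plan is to treat each reference plane separately and, in both cases, to turn the defining equation of an isotropic singular minimal surface into an identity in the orbit parameter $\theta$ that can be split coefficient by coefficient. Concretely, I would substitute the parametrization \eqref{eq::ParametrizParabRevSurf}, the mean curvature \eqref{HParabRevSurf}, and the parabolic normal \eqref{MparParabRevSurf} into the characterization \eqref{sms1} (isotropic reference plane) or \eqref{sms2} (non-isotropic reference plane), with $\lambda=0$ after a translation. Since $S_\gamma$ is invariant under the one-parameter group $\mathcal{G}_p$ while the reference plane is held fixed, the equation must hold at \emph{every} point of $S_\gamma$; written out, it becomes a polynomial identity in $\theta$ whose coefficients are functions of $t$. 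As $1,\theta,\theta^2$ are linearly independent, every coefficient must vanish, and this converts the single equation into a short system that couples algebraic constraints on $a,b,c,c_1,c_2$ with an ODE for the profile $z(t)$.

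For the isotropic plane the distance is the $x$-coordinate $a\theta+t$ and $\langle\mathbf{N}_{par},X\rangle$ is the first entry $-c_1\theta-z'$ of \eqref{MparParabRevSurf}, so \eqref{sms1} reads $2H(a\theta+t)=-c_1\theta-z'$, an identity \emph{affine} in $\theta$. The constant part gives $2Ht=-z'$ and the linear part gives $2Ha=-c_1$. If $a=0$ the second relation forces $c_1=0$, and the first, using $H$ from \eqref{HParabRevSurf}, becomes $(tz')'=-\tfrac{c_2}{b}\,t$, whose integration yields $z(t)=-\tfrac{c_2}{4b}t^2+z_2\ln t+z_1$. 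If $a\neq0$ the relation $2Ha=-c_1$ makes $H$ constant, hence $z''$ constant and $z$ quadratic; comparing with \eqref{HParabRevSurf} then forces $ac_2+2bc_1=0$ and gives $z(t)=\tfrac{c_1}{2a}t^2+z_1$. This settles item~1.

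For the non-isotropic plane I would use \eqref{sms2}, where the weight is measured with the secondary metric, so the relevant datum is the third entry $\tfrac12-\tfrac12 F$ of \eqref{MparParabRevSurf} together with the height (third) coordinate of \eqref{eq::ParametrizParabRevSurf}. Writing out $2H\,z_{\mathrm{coord}}=\tfrac12-\tfrac12 F$ and separating by powers of $\theta$, the vanishing of the $\theta$-linear part is $2H(c+c_1t)=0$; since $H\not\equiv0$ and $1,t$ are independent, this forces $c=c_1=0$, which is exactly the asserted constraint. Imposing $c=c_1=0$ simplifies the mean curvature to $H=\tfrac{a^2+b^2}{2b^2}z''+\tfrac{c_2}{2b}$ and collapses the quantity $F$ to the expression in \eqref{ff}; feeding these into the defining equation and clearing the factor $\tfrac{2b^2}{a^2+b^2}$ assembles precisely the second-order equation \eqref{zz}, with $c_2$ surviving as a free parameter.

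The step I expect to be the main obstacle is the bookkeeping in this last case. Unlike the isotropic plane, where the identity is only affine in $\theta$, here both the height coordinate and the quantity $F$ in \eqref{ff} carry quadratic $\theta$-terms, so one must organize the expansion carefully to peel off $c=c_1=0$ from the linear part while retaining $c_2$ in the surviving relation \eqref{zz}. The degenerate possibilities also require separate verification: the minimal case $H\equiv0$, and the case in which the parabolic motion degenerates — when $c_1=c_2=0$ the group reduces to a translation and \eqref{zz} specializes to $2zz''+z'^2=\tfrac{b^2}{a^2+b^2}$, consistent with the general form. Once these are dealt with, the integrations in item~1 and the reduction to \eqref{zz} in item~2 are routine.
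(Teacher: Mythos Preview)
Your proposal follows essentially the same route as the paper: substitute \eqref{HParabRevSurf} and \eqref{MparParabRevSurf} into \eqref{sms1} or \eqref{sms2}, clear denominators, and split the resulting identity by powers of $\theta$ to obtain algebraic constraints on $a,b,c,c_1,c_2$ together with an ODE for $z(t)$. Your handling of the sub-case $a\neq0$ in item~1 is in fact a little cleaner than the paper's—observing directly that $2Ha=-c_1$ forces $H$ constant and then reading off $z'=(c_1/a)t$ from $2Ht=-z'$ avoids the paper's intermediate integration and subsequent elimination of the linear coefficient $z_2$—and your flag that the case $H\equiv0$ needs separate treatment corresponds exactly to the paper's sub-case~(b) in item~2.
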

\begin{proof}
We distinguish between two cases, depending on the type of the reference plane. In what follows, we employ the shorthand notation $A = \frac{a^2+b^2}{b^2}$ and $B = \frac{bc_2-ac_1}{b^2}$, which implies $H=(Az''+B)/2$.
\begin{enumerate}
    \item The reference plane is $\Pi_{yz}$. Then, $\langle \mathbf{N}_{par},X\rangle=-c_1\theta-z'$. Since the distance to $\Pi_{yz}$ in Eq. \eqref{sms1} is the $x$-coordinate, we have  $d(p,\Pi_{yz})=t+a\theta$. Equation \eqref{sms1} for the mean curvature of a singular minimal surface of parabolic revolution reduces to
    \[
    \frac{Az''+B}{2}=-\frac{c_1\theta+z'}{2(t+a\theta)},  
    \] 
   or equivalently,  
   \[(aAz''+aB+c_1)\theta+(Atz''+z'+tB)=0.\]
   Since  $\{1,\theta\}$ forms a set of linearly independent functions, we find 
    \begin{equation}\label{systemSingMinSurfParabRev}
        \left\{
        \begin{array}{l}
             az''+\dfrac{aB+c_1}{A} = 0  \\ [5pt]
             z''+\dfrac{z'}{tA}+\dfrac{B}{A} = 0 
        \end{array}
        \right..
    \end{equation}
    We have two sub-cases.
    \begin{enumerate}
        \item Case   $a=0$. In particular,   $A=1$ and $B=c_2/b$. Moreover,  $c_1=0$ and  the second equation of \eqref{systemSingMinSurfParabRev} is $ 
     z''+\dfrac{z'}{t}+\dfrac{c_2}{b} = 0$, whose solution is 
     \[ z(t) = -\dfrac{c_2}{4b}\,t^2+z_2\ln(t)+z_1,\quad z_1,z_2\in\mathbb{R}.\]
    \item Case  $a\not=0$. The first equation of the system \eqref{systemSingMinSurfParabRev} gives
    \[
      z(t) = -\frac{aB+c_1}{2aA}\,t^2+z_2t+z_1,\quad z_1,z_2\in\mathbb{R}.
    \]
    Substituting this solution for $z(t)$ in the second equation of \eqref{systemSingMinSurfParabRev}, we find
    \[
      -\frac{aB+c_1}{aA} + \frac{z_1-\frac{aB+c_1}{aA}t}{tA}+\frac{B}{A} = 0,\]
      or equivalently, 
      \[ \frac{z_1}{t}-\frac{aB+c_1}{a}-\frac{aB+c_1}{aA}+B = 0.
    \]
    Thus, we must have $z_2=0$ and $0=aAB-A(aB+c_1)-aB-c_1=-(c_1+Ac_1+aB)$. From the definition of $A$ and $B$, we conclude that the parameters characterizing a singular minimal surface of parabolic revolution are subjected to the constraint $ac_2+2bc_1=0$. Finally, using that $ac_2+bc_1=-bc_1$ implies that the solution for $z(t)$ takes the form
    \[
     z(t)=-\frac{aB+c_1}{2aA}\,t^2+z_1=\frac{c_1}{2a}\,t^2+z_1,\quad z_1\in\mathbb{R}.
    \]
    \end{enumerate}
    \item The reference plane is $\Pi_{xy}$. For the computation of Eq. \eqref{sms2}, we have $\llangle \mathbf{N}_{par},Z\rrangle=\frac{1}{2}(1-F(t))$ and $d(p,\Pi_{xy})=c\,\theta+\frac{ac_1+bc_2}{2}t^2+c_1t\theta+z$. Using Eq. \eqref{HParabRevSurf}, it follows that Eq.  \eqref{sms2} is
    $$
    B+A z''=\frac{1-F(t;z,z')}{2\left[(c+c_1t)\theta+\frac{1}{2}(ac_1+bc_2)t^2+z\right]}, 
    $$
    or equivalently, 
    \[  
    2\theta(c+c_1t)(Az''+B)=1-F-\left[2z+(ac_1+bc_2)t^2\right](Az''+B).\]
   Using again that $\{1,\theta\}$ is linearly independent,  we have 
   \begin{equation}\label{ss3}
   2(c+c_1t)(Az''+B) =0\quad \mbox{and}\quad F-1+[2z+(ac_1+bc_2)t^2](Az''+B)=0.
   \end{equation}
   \begin{enumerate}
       \item Case $Az''+B\not=0$ at some point $t=t_0$. Around an interval of $t_0$, we have  $c=c_1=0$. In particular, from \eqref{ff}, the function $F$ is  $F=Az'^2-\frac{2atc_2}{b}z'+t^2c_2^2$. With this value of $F$,   the second equation of \eqref{ss3} is   
    \[
        (2z+bc_2t^2)z''+z'^2-\frac{2abc_2}{a^2+b^2}\,tz'+\frac{b^2c_2^2}{a^2+b^2}t^2+\frac{bc_2}{a^2+b^2}(2z+bc_2t^2)-\frac{b^2}{a^2+b^2}=0.
    \]
    This equation is just \eqref{zz}.
    \item Case  $Az''+B=0$. Then 
    \begin{equation*}\label{ss4}
        z(t) = -\frac{B}{2A}\,t^2+z_2t+z_1=\frac{ac_1-bc_2}{a^2+b^2}\,\frac{t^2}{2}+z_2t+z_1,\quad z_1,z_2\in\mathbb{R}.
        \end{equation*}
    The second equation of \eqref{ss3} gives $F-1=0$. From the value of $F$ in \eqref{ff}, we find a polynomial of degree 2 in $t$, namely,  $f_2t^2+f_1t+f_0-1=0$. This implies   $f_2=0$, $f_1=1$, and $f_0=1$. The quadratic coefficient $f_2$ simplifies to
    \[
    \frac{(c_1^2+c_2^2)[b^2+(1+a)^2]}{a^2+b^2} = 0.
    \]
    However, this constraint would imply $b=0$, which is not allowed by hypothesis. This proves that the case $Az''+B=0$ is not possible.
    \end{enumerate}
\end{enumerate}
\end{proof}

If we consider an isotropic reference plane, the family of generating curves of singular minimal surfaces of parabolic revolution contains simply isotropic catenaries when $c_2=0$ and also isotropic circles when $z_2=0$. In the former class, we have surfaces of warped translation (see Figure \ref{fig:SingMinSurf}, Center). In the latter, the surfaces have constant mean curvature: $H=\frac{c_2}{2b}$ if $a=0$ and $H=-\frac{c_1}{2a}$ if otherwise. Constant mean curvature surfaces of parabolic revolution are implicitly defined as parabolic quadrics. From the proposition to be proved below, the type of the parabolic quadric is determined by a parameter $\Lambda$, which is $\Lambda=-c_1^2$ if $a=0$ and $\Lambda=-2c_1^2-\frac{c_2^2}{2}$ if otherwise. In both cases, the corresponding surface of parabolic revolution is a hyperbolic paraboloid.

\begin{proposition}\label{prop::CMCparabRevSurf}
An admissible surface $M^2\subset\mathbb{I}^3$ has constant mean curvature $H_0$ if, and only if, it is a parabolic quadric, where the parabolas that foliate the surfaces are isotropic circles, i.e., parabolas whose axes are an isotropic line. In addition, the type of the parabolic quadric is determined by the parameter $\Lambda=2(ac_1+bc_2)H_0-(c_1^2+c_2^2)$: $M^2$ is an elliptic paraboloid if $\Lambda>0$; $M^2$ is a parabolic cylinder if $\Lambda=0$; and $M^2$ is a hyperbolic paraboloid if $\Lambda<0$. Finally, the only minimal surfaces of parabolic revolution in $\mathbb{I}^3$ are the hyperbolic paraboloids implicitly defined by $z=\lambda(x^2-y^2)$.  
\end{proposition}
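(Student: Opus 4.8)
The plan is to classify surfaces of parabolic revolution through the explicit mean curvature formula \eqref{HParabRevSurf}. Abbreviating $A=\frac{a^2+b^2}{b^2}$ and $B=\frac{bc_2-ac_1}{b^2}$, Eq. \eqref{HParabRevSurf} reads $H=\tfrac{1}{2}(Az''+B)$, and $A>0$ since $b\neq 0$. The requirement $H\equiv H_0$ is then equivalent to the ODE $z''=(2H_0-B)/A$, whose right-hand side is constant; integrating twice yields the quadratic profile
\begin{equation*}
    z(t)=\frac{2H_0-B}{2A}\,t^2+z_2\,t+z_1,\quad z_1,z_2\in\mathbb{R}.
\end{equation*}
Conversely, any quadratic $z(t)$ has constant $z''$ and hence, by \eqref{HParabRevSurf}, constant $H$. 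This establishes the equivalence between constant mean curvature and the height function being a polynomial of degree at most two.

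Next I would exhibit such a surface as a parabolic quadric. Substituting the quadratic $z(t)$ into the parametrization \eqref{eq::ParametrizParabRevSurf} and inverting the first two coordinates, $\theta=y/b$ and $t=x-ay/b$, the third coordinate becomes a degree-two polynomial $Q(x,y)$; thus $M^2$ is the graph $z=Q(x,y)$ of a parabolic quadric. Moreover, the coordinate curves $\theta=\mathrm{const}$ (the orbits of the generating curve under $\mathcal{G}_p$) are parabolas whose axes are parallel to the isotropic $z$-direction, i.e., isotropic circles, so the foliation is as claimed. Since a parabolic quadric foliated in this way pulls back to a quadratic profile along $\theta=0$, this also closes the converse implication for the characterization.

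To determine the type, I would read off the definiteness of the quadratic part of $Q$ via its discriminant $\Delta$ (for a form $\alpha x^2+\beta xy+\gamma y^2$, $\Delta=\beta^2-4\alpha\gamma$): the graph is an elliptic paraboloid when the form is definite ($\Delta<0$), a parabolic cylinder when it is degenerate ($\Delta=0$), and a hyperbolic paraboloid when it is indefinite ($\Delta>0$). Collecting the $x^2$, $xy$, and $y^2$ coefficients of $Q$ from the substitution above and simplifying with the definitions of $A$ and $B$, a direct computation gives the clean identity
\begin{equation*}
    (a^2+b^2)\,\Delta=-\bigl[\,2(ac_1+bc_2)H_0-(c_1^2+c_2^2)\,\bigr]=-\Lambda.
\end{equation*}
As $a^2+b^2>0$, the sign of $\Delta$ is opposite to that of $\Lambda$, yielding at once $\Lambda>0\Leftrightarrow\Delta<0$ (elliptic paraboloid), $\Lambda=0\Leftrightarrow\Delta=0$ (parabolic cylinder), and $\Lambda<0\Leftrightarrow\Delta>0$ (hyperbolic paraboloid).

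Finally, for the minimal case I would set $H_0=0$, so that $\Lambda=-(c_1^2+c_2^2)\leq 0$; ruling out the degenerate profiles (planes and cylinders) forces $c_1^2+c_2^2>0$, hence $\Lambda<0$ and the surface is a hyperbolic paraboloid. A simply isotropic rigid motion (a rotation $\mathcal{R}_\theta$ to diagonalize the quadratic form, followed by a translation absorbing the linear and constant terms) then brings $Q$ into the normal form $z=\lambda(x^2-y^2)$. The main obstacle is the bookkeeping in the discriminant step: one must carry the full quadratic expansion of $Q$ obtained by substituting the quadratic $z(t)$ into \eqref{eq::ParametrizParabRevSurf} and verify the cancellations leading to $(a^2+b^2)\Delta=-\Lambda$; a further subtlety is matching the Euclidean-looking classification of $z=Q(x,y)$ with the intrinsic isotropic classification of parabolic quadrics.
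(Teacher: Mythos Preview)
Your proof is correct and follows essentially the same strategy as the paper: solve $H\equiv H_0$ via \eqref{HParabRevSurf} to get a quadratic profile, write the surface as a graph $z=Q(x,y)$, and classify by the discriminant of the quadratic part. The only real difference is that the paper outsources the explicit implicit-equation coefficients to references \cite{daSilvaMJOU2021,KelleciJMAA2021} and merely states that the sign of $\Lambda$ governs the type, whereas you carry out the inversion $\theta=y/b$, $t=x-ay/b$ and the discriminant computation directly, arriving at the same identity $(a^2+b^2)\Delta=-\Lambda$; for the minimal normal form the paper observes that the trace of the Hessian of $Q$ vanishes (since $2H=\Delta u$), while you use a diagonalizing isotropic rotation---both yield $z=\lambda(x^2-y^2)$.
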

\begin{proof}
Let $M^2$ be a surface of parabolic revolution with constant mean curvature (CMC) $H_0$. Then, $M^2$ is generated by an isotropic circle $z(t)=z_0+z_1t+z_2t^2$, where the mean curvature $H_0$ depends on the parameters defining the surface by the relation $z_2=(ac_1-bc_2+2b^2H_0)/2(a^2+b^2)$ (see Ref. \cite{daSilvaMJOU2021}, Example 5.4). These surfaces are implicitly given by the equation
\begin{equation*}
    (x,y,z)\in\mathbb{I}^3:z-z_0=Ax^2+2Bxy+Cy^2+Dx+Ey,
\end{equation*}
where the coefficients are (see Ref. \cite{KelleciJMAA2021}, Fig. 3)
$$A=z_2,\quad B=\frac{c_1-2az_2}{2b},\quad C= \frac{2a^2z_2-ac_1+bc_2}{2b^2},\quad D=z_1,\quad E=\frac{c-az_1}{b}.$$
The type of the CMC surface is determined by the sign of the parameter $\Lambda=2(ac_1+bc_2)H_0-(c_1^2+c_2^2)$: elliptic paraboloid if $\Lambda>0$; parabolic cylinder if $\Lambda=0$; and hyperbolic paraboloid if $\Lambda<0$. For minimal surfaces, $\Lambda<0$ and, therefore, the surface is a hyperbolic paraboloid. Finally, the trace of the quadratic part defining the surface vanishes, which implies that a minimal surface of parabolic revolution is implicitly given, up to rigid motions, by the equation $z=\lambda (x^2-y^2)$.
\end{proof}


\begin{funding}
  Luiz da Silva acknowledges the support provided by the Mor\'a Miriam Rozen Gerber fellowship for Brazilian postdocs and the Faculty of Physics Postdoctoral Excellence Fellowship.
  Rafael  L\'opez  is a member of the Institute of Mathematics  of the University of Granada. This work has been partially supported by  the Projects  I+D+i PID2020-117868GB-I00, supported by MCIN/ AEI/10.13039/501100011033/,  A-FQM-139-UGR18 and P18-FR-4049.
\end{funding}

\begin{thebibliography}{9}











\bibitem{BarbosaColares1986}
{J. L. M. Barbosa and A. G. Colares, \emph{Minimal Surfaces in $\mathbb{R}^3$}, Springer, Berlin Heidelberg, 1986.}

\bibitem{bl} 
G. Bliss, \emph{Lectures on the Calculus of Variations}, University of Chicago Press, Chicago, 1946.
 
\bibitem{si1} L. C. B. da~Silva, The geometry of Gauss map and shape operator in simply isotropic and pseudo-isotropic spaces, \emph{J. Geom.} \textbf{110} (2019), 31.

\bibitem{daSilvaMJOU2021}
L. C. B. da~Silva, Differential geometry of invariant surfaces in simply isotropic and pseudo-isotropic spaces,
\emph{Math. J. Okayama Univ.} \textbf{63} (2021), 15.

\bibitem{daSilvaJG2021}
L. C. B. da~Silva, Holomorphic representation of minimal surfaces in simply isotropic space. \emph{J. Geom.} \textbf{112} (2021), 35.

\bibitem{euler} 
L. Euler, Methodus inveniendi curvas lineas maximi minimive proprietate gaudentes sive solution problematis isoperimetrici latissimo sensu accepti, Lausanne. Reprinted as Opera omnia Ser. 1, V. 24 (1952).

\bibitem{is} 
C. Isenberg, \emph{The Science of Soap Films and Soap Bubbles}, Dover Publications, Inc., 1992.
 
\bibitem{KelleciJMAA2021}
A. Kelleci and L. C. B. da~Silva, Invariant surfaces with coordinate finite-type Gauss map in simply isotropic space, \emph{J. Math. Anal. Appl.} \textbf{495} (2021), 124673.

\bibitem{Dierkes1990}
U. Dierkes, Singular Minimal Surfaces, in: S. Hildebrandt and H. Karcher (eds), \emph{Geometric Analysis and Nonlinear Partial Differential Equations}. Springer, Berlin, Heidelberg (2003), 177--193. 

\bibitem{MullerMonatsh}
E. M\"{u}ller, Relative Minimalfl\"{a}chen. \emph{Monatshefte f\"{u}r Mathematik und Physik} \textbf{31} (1921), 3--19. 

\bibitem{LopezAGAG2018}
R. L\'opez, Invariant singular minimal surfaces, \emph{Ann. Glob. Anal. Geom.} \textbf{53} (2018), 521--541.

\bibitem{Lopez2022CatenarySpaceForms}
R. L\'opez, The catenary in space forms, \emph{e-print} arXiv:2208.13694.

\bibitem{Lopez2022CatenaryLorentzSpaceForms}
R. L\'opez, A characterization of rotational minimal surfaces in the de Sitter space, \emph{e-print} arXiv:2208.13698.

\bibitem{ni} 
J. C. C. Nitsche, \emph{Lectures on Minimal Surfaces}, Cambridge University Press, Cambridge, 1989.

\bibitem{Sachs1987}
H. Sachs, \emph{Ebene Isotrope Geometrie}, Friedr. Vieweg \& Sohn, Brauschweig, 1987.

\bibitem{Sachs1990}
H. Sachs, \emph{Isotrope Geometrie des Raumes}, Friedr. Vieweg \& Sohn, Brauschweig, 1990.

\bibitem{Simon1991}
U. Simon, A. Schwenk-Schellschmidt, and H. Viesel, \emph{Introduction to the Affine Differential Geometry of Hypersurfaces}, Science University of Tokyo, Tokyo, 1991.

\bibitem{StruveRM2005}
R. Struve, Orthogonal Cayley-Klein groups, \textit{Results. Math.} \textbf{48} (2005), 168.

\bibitem{VerpoortAG2012}
S. Verpoort, A characterisation of Manhart's relative normal vector fields, \textit{Adv. Geom.} \textbf{12} (2012), 29--42.

\end{thebibliography}

\end{document}